\title{Deterministic Thinning of Finite Poisson Processes}
\date{30 November 2009}
\author{Omer Angel}
\author{Alexander E. Holroyd}
\author{Terry Soo}
\thanks{Funded in part by Microsoft research (AEH) and NSERC (all authors)}
\subjclass[2000]{Primary 60G55}
\newtheorem{thm}{Theorem}
\newtheorem{lemma}[thm]{Lemma}
\newtheorem{prop}[thm]{Proposition}
\newtheorem{cor}[thm]{Corollary}
\newtheorem{question}{Question}
\newtheorem{remark}{Remark}
\newcommand{\ind}{{\mathbf 1}}
\newcommand\one[1]{\ind_{#1}}
\newcommand{\eqd}{\stackrel{d}{=}}
\newcommand\ns[1]{ \left\{ {#1} \right\} }
\newcommand{\leb}{{\mathcal L}}
\newcommand{\M}{{\mathbb M}}
\newcommand{\J}{{\mathcal J}}
\newcommand{\Z}{{\mathbb Z}}
\newcommand{\R}{{\mathbb R}}
\newcommand{\N}{{\mathbb N}}
\renewcommand{\P}{{\mathbb P}}
\newcommand{\dof}{\bf\boldmath}
\newcommand{\U}{{\mathcal U}}
\newcommand{\A}{{\mathcal A}}
\newcommand\intp[1]{ \lfloor #1 \rfloor }
\begin{document}

\begin{abstract}
Let $\Pi$ and $\Gamma$ be homogeneous Poisson point processes
on a fixed set of finite volume.  We prove a necessary and
sufficient condition on the two intensities for the existence
of a coupling of $\Pi$ and $\Gamma$ such that $\Gamma$ is a
deterministic function of $\Pi$, and all points of $\Gamma$ are
points of $\Pi$.  The condition exhibits a surprising lack of
monotonicity.  However, in the limit of large intensities, the coupling
exists if and only if the expected number of points is at least
one greater in $\Pi$ than in $\Gamma$.
\end{abstract}

\maketitle

\section{Introduction}

Given a homogeneous Poisson point process on $\R^d$, it is well
known that selecting each point independently with some fixed
probability gives a homogeneous Poisson process of lower
intensity.  This is often referred to as {\em thinning}. Ball
\cite{ball} proved the surprising fact that in $d=1$, thinning
can be achieved without additional randomization: we may choose
a subset of the Poisson points as a deterministic function of
the Poisson process so that the chosen points form a Poisson
process of any given lower intensity; furthermore, the function
can be taken to be a translation-equivariant factor (that is,
if a translation is applied to the original process, the chosen
points are translated by the same vector).  Holroyd, Lyons and
Soo \cite{HLS} extended this result to all dimensions $d$, and
further strengthened it by showing that the function can be
made {\em isometry}-equivariant, and that the non-chosen points
can also form a Poisson process (it cannot be independent of
the process of chosen points, however).  Evans \cite{Evans}
proved that a Poisson process cannot be similarly thinned in an
equivariant way with respect to any group of affine
measure-preserving maps that is strictly larger than the
isometry group.

Here we address the question: can the same be done for a
Poisson process in a finite volume? Postponing considerations
of equivariance, we simply ask whether there exists a
deterministic thinning rule giving a Poisson process of lower
intensity.  The answer depends on the two intensities, as
follows.  Let $\leb$ denote Lebesgue measure on $\R^d$.

\begin{thm}\label{main}
Fix $\lambda>\mu>0$, and a Borel set $S \subset \R^d$ with
$\leb S \in (0, \infty)$.  Let $\Pi$ be a homogeneous Poisson
process of intensity $\lambda$ on $S$.  Let $X$ and $Y$ be
Poisson random variables with respective means $\lambda\,\leb
S$ and $\mu\,\leb S$.  The following are equivalent.
\begin{enumerate}[(i)]
\item \label{exist} There exists a measurable function $f$
    such that $f(\Pi)$ is a homogeneous Poisson process of
    intensity $\mu$ on $S$, and every point of $f(\Pi)$ is
    a point of $\Pi$ almost surely.
\item
\label{prob}
There exists an integer $k\geq 0$ such that
\begin{align*}
\P(X=k)&\leq \P(Y=k),\\
\text{and}\quad
\P(X\leq k+1)&\leq \P(Y\leq k).
\end{align*}
\item
\label{three}
There is no integer $k \geq 0$ such that
\begin{align*}
\P(X=k+1)&> \P(Y=k+1),\\
\text{and}\quad
\P(X\leq k+1)&> \P(Y\leq k  ).
\end{align*}
\end{enumerate}
\end{thm}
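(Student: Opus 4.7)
My plan is to establish the cycle $(ii) \Rightarrow (i) \Rightarrow (iii) \Rightarrow (ii)$. Write $\pi_n := \P(X=n)$, $\rho_m := \P(Y=m)$, $\Lambda := \lambda\,\leb S$, $M := \mu\,\leb S$, and $q_{n,m} := \P(|f(\Pi)|=m \mid |\Pi|=n)$. The main substance is in $(iii) \Leftrightarrow (i)$ via a transportation framework, together with an analytic step for $(ii) \Leftrightarrow (iii)$.

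The starting point is an atomic lemma, the key geometric obstruction in finite volume: for any valid thinning $f$ and every $n \geq 1$, $q_{n,n} \in \{0,1\}$. I prove it via Lebesgue density: the event $\{f(\Pi)=\Pi,\,|\Pi|=n\}$ corresponds to a symmetric measurable $A \subseteq S^n$, and for the restricted uniform law on $A$ to still be i.i.d.\ uniform, the density of $A$ must equal $|A|/(\leb S)^n$ at a.e.\ point, so $A$ is null or co-null. From this, $(i) \Rightarrow (iii)$ is immediate: if some $k$ violated (iii), then $\pi_{k+1} q_{k+1,k+1} \leq \rho_{k+1} < \pi_{k+1}$ would force $q_{k+1,k+1}=0$, so $\{|\Pi| \leq k+1\} \subseteq \{|f(\Pi)| \leq k\}$ and $\P(Y \leq k) = \P(|f(\Pi)| \leq k) \geq \P(X \leq k+1) > \P(Y \leq k)$, a contradiction.

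For $(iii) \Rightarrow (i)$, let $k^*$ be the largest integer with $\pi_{k^*} \leq \rho_{k^*}$; this exists because $\pi_k/\rho_k = e^{M-\Lambda}(\Lambda/M)^k$ is strictly increasing in $k$. I construct $f$ in two stages. Stage~A sets $q_{n,n}=1$ for $n \leq k^*$ and, for $n>k^*$, assigns nonnegative $q_{n,m}$ with $m<n$ and $\sum_m q_{n,m}=1$ by solving the bipartite transportation problem whose supplies are $\pi_n$, whose demands are $\rho_m-\pi_m$ for $m \leq k^*$ and $\rho_m$ for $m>k^*$, and whose capacity constraint is $m<n$. A Hall-type condition on upper sets of demands telescopes to $\P(X \leq k_0) \leq \P(Y \leq k_0-1)$ for all $k_0>k^*$, which is exactly (iii); the complementary case is stochastic dominance of $X$ over $Y$ and holds automatically. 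Stage~B realizes the transport by a measurable $f$: for each $n>k^*$ and target $m<n$, I partition $S^n$ equivariantly under coordinate permutations into measurable pieces of prescribed total Lebesgue measure $q_{n,m}(\leb S)^n$, and within each piece select a specific $m$-subset of coordinates by a cyclic-shift rule (in the toy case $n=2,m=1$: $A_1 = \{(x_1,x_2) : x_2 - x_1 \in (0,\ell) \bmod \leb S\}$ after identifying $S$ with an interval). The shifts are arranged so the permuted images are disjoint and relevant slices have constant length, making the kept subset conditionally i.i.d.\ uniform on $S$.

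For $(iii) \Rightarrow (ii)$, take $k=k^*$: clause (iii) at $k^*$ (where $\pi_{k^*+1}>\rho_{k^*+1}$) forces $\P(X \leq k^*+1) \leq \P(Y \leq k^*)$. For $(ii) \Rightarrow (iii)$, I would use the Poisson-specific concavity of $\log(\rho_{k+1}/\pi_{k+2})$ in $k$ (second derivative $-1/(k+2)^2$) combined with $\P(Y\leq k)-\P(X\leq k+1) \to 0$ as $k \to \infty$, to propagate nonnegativity from one $k \leq k^*$ to all $k \geq k^*$. \textbf{The hardest ingredient} is Stage~B of $(iii) \Rightarrow (i)$: constructing equivariant partitions whose pieces realize both the prescribed size distribution and the i.i.d.-uniform conditional law. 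The atomic lemma is exactly the obstruction to evade, and the cyclic-shift construction exploits the continuous randomness within each coordinate to do so whenever $m<n$.
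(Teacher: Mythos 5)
Your overall architecture --- a density argument for $(i)\Rightarrow(iii)$, a transport/coupling of Poisson variables plus a deletion device for $(iii)\Rightarrow(i)$, and a Poisson-specific analytic lemma linking $(ii)$ and $(iii)$ --- matches the paper's. But there are two real gaps. First, your ``atomic lemma'' ($q_{n,n}\in\{0,1\}$ for every $n$) is stronger than what the density argument gives, and I do not believe it is true in general. The constraint is not that the law of $[\Pi]$ conditioned on the event $A=\{f(\Pi)=\Pi,\ |\Pi|=n\}$ must itself be i.i.d.\ uniform; it is that the \emph{mixture} over all $N\ge n$ of the conditional laws of $[f(\Pi)]$, weighted by $\pi_N q_{N,n}$, must equal $\rho_n Q$. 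That mixture condition only forces the density inequality $\pi_n \one{[A]}/Q([A]) \le \rho_n$ on $[A]$, hence $Q([A])=0$ \emph{when} $\pi_n>\rho_n$, which is precisely what the paper proves and exactly what you use in $(i)\Rightarrow(iii)$; when $\pi_n\le\rho_n$ nothing forces $Q([A])\in\{0,1\}$. So your conclusion is fine but you should state and prove only the needed case, as the paper does.

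The bigger gap is Stage~B. You identify it as the ``hardest ingredient'' but you do not actually prove it: the cyclic-shift construction is specified only for $n=2,m=1$, and the sentence ``the shifts are arranged so the permuted images are disjoint and relevant slices have constant length'' is not a construction for general $n,m$ with prescribed weights $q_{n,m}$. The paper sidesteps exactly this difficulty with Proposition~\ref{deletion}: a one-point deletion $g$ that not only leaves $n-1$ i.i.d.\ uniforms but also produces a ``spare'' uniform $v(\U)$ \emph{independent of} $g(\U)$. Corollary~\ref{cordel} then uses $v$ to order the remaining points and sample the random target size $Z$, giving arbitrary multi-point deletion essentially for free. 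Without that independence device, directly constructing symmetric measurable partitions of $S^{\{n\}}$ whose pieces have prescribed measures \emph{and} yield i.i.d.-uniform kept subsets is a genuinely hard problem that your sketch does not solve. Minor point on the reverse implication: your concavity argument for $(ii)\Rightarrow(iii)$ can be made to work, but you need the base observation that $d_{-1}:=\P(Y\le -1)-\P(X\le 0)=-e^{-\lambda\,\leb S}<0$ (together with $d_k\to 0$) to rule out the sign pattern where the increment $\rho_{j+1}-\pi_{j+2}$ is negative, then positive, then negative with the negative stretch containing your $k$; the paper's Lemma~\ref{mono}\eqref{graphmeet} achieves the same propagation more directly via the integral representation \eqref{integralform}.
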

\begin{figure} \begin{center}
\begin{picture}(0,0)(0,0)
\put(-3,232){$\mu$}\put(296,-3){$\lambda$}
\end{picture}
  \includegraphics[width=0.95\textwidth]{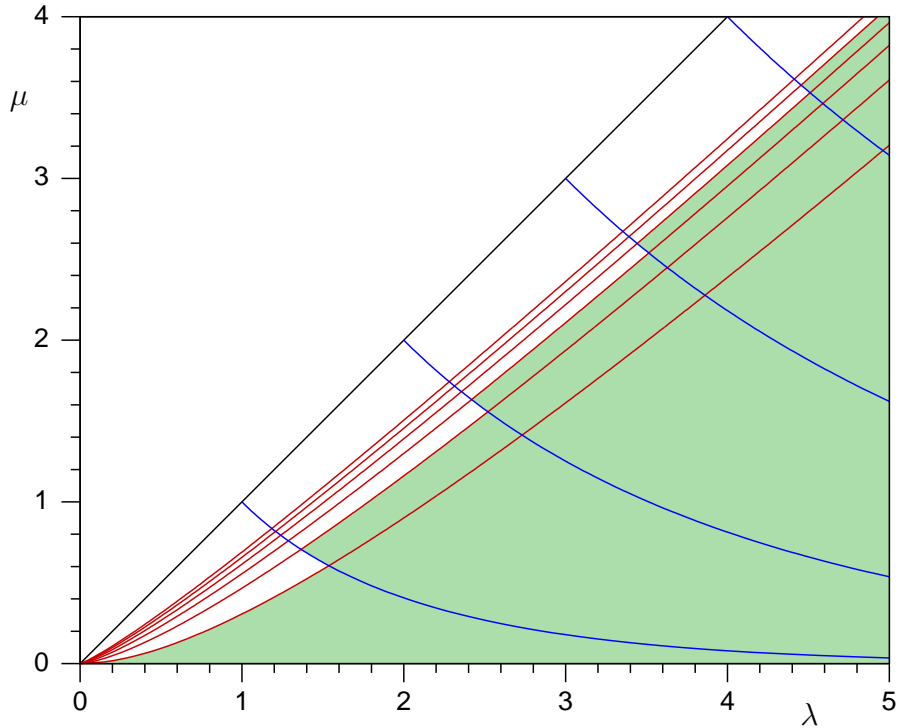}
  \caption{The shaded (closed) region is the set of pairs of intensities
    $(\lambda,\mu)$ for which a thinning exists in the case $\leb S=1$.
    Also shown are the curves $\P(X \leq k+1) =\P(Y\leq k)$ for
    $k=0,\ldots,5$ (red), the curves $\P(X = k) = \P(Y = k)$ for
    $k=1,\ldots, 4$ (blue), and the line $\mu=\lambda$.}
  \label{graph}
\end{center} \end{figure}

Figure \ref{graph} depicts the pairs $(\lambda,\mu)$ for which
conditions (i)--(iii) hold. If $f$ satisfies condition
(\ref{exist}) of Theorem~\ref{main} we say that $f$ is a
(deterministic, Poisson) {\dof{thinning}} on $S$ from $\lambda$
to $\mu$. The domain and range of $f$ are both the set of
simple point measures on $S$.  The equivalence of (ii) and
(iii) is of course a relatively mundane technicality, but it is
useful to have both forms of the condition available.

\begin{remark}
\label{boreliso} By the Borel isomorphism theorem (see e.g.\
\cite[3.4.24]{borel}) and the mapping theorem \cite{kingman},
Theorem~{\ref{main}} generalizes immediately to any standard
Borel space with a finite non-atomic measure in place of
$\leb$. By the same token, it suffices to prove
Theorem~\ref{main} for the special case $S=[0,1]$.
\end{remark}

The corollaries below follow from Theorem~\ref{main} by an
analysis of the curves in Figure~\ref{graph}.

\begin{cor}[Monotonicity in $\lambda$]
\label{auxresults}
Suppose there is a thinning from $\lambda$ to $\mu$ on $[0,1]$.
\begin{enumerate}[(i)]
\item
\label{monoinl}
If $\lambda' > \lambda$, then there exists a thinning from $\lambda'$ to $\mu$.
\item
\label{monop}
If ${\mu'}/{\lambda'} = {\mu}/{\lambda}$ and $\lambda' > \lambda$,
then there exists a thinning from $\lambda'$ to $\mu'$.
\end{enumerate}
\end{cor}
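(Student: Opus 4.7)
The plan is to apply the contrapositive of Theorem~\ref{main}(\ref{three}) in both parts. Write $p_j(a) := e^{-a}a^j/j!$ and $F_j(a) := \sum_{i=0}^j p_i(a)$.

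For part~(\ref{monoinl}) I fix $\mu$ and suppose, for contradiction, that thinning exists at $(\lambda,\mu)$ but fails at $(\lambda',\mu)$ with $\lambda'>\lambda$. Then by (\ref{three}) some $k\ge 0$ satisfies $p_{k+1}(\lambda') > p_{k+1}(\mu)$ and $F_{k+1}(\lambda') > F_k(\mu)$. Since $F_{k+1}$ is strictly decreasing, the second inequality transfers to $\lambda$ at once: $F_{k+1}(\lambda)>F_{k+1}(\lambda')>F_k(\mu)$. The whole task is to transfer the first inequality as well, which will contradict the hypothesis at $(\lambda,\mu)$. This is a short case analysis using the unimodality of $p_{k+1}$ with mode at $k+1$: the case $\mu\ge k+1$ is impossible because $p_{k+1}$ would then be decreasing on $[\mu,\lambda']$ and force $p_{k+1}(\lambda')<p_{k+1}(\mu)$; if $\lambda\ge k+1$, then $p_{k+1}$ is decreasing on $[\lambda,\lambda']$ and $p_{k+1}(\lambda)\ge p_{k+1}(\lambda')>p_{k+1}(\mu)$; if $\lambda<k+1$, then $\mu<\lambda<k+1$ and $p_{k+1}(\lambda)>p_{k+1}(\mu)$ by monotonicity below the mode.

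For part~(\ref{monop}) I set $c:=\mu/\lambda$ and parameterize along the ray by $t>0$, writing the candidate pair $(t\lambda,t\mu)$. The goal is to show $T:=\{t>0:\text{thinning exists at }(t\lambda,t\mu)\}$ is upward closed. By Theorem~\ref{main}(\ref{prob}), $t\in T$ if and only if, for some $k\ge 0$, both $p_k(t\lambda)\le p_k(t\mu)$ and $F_{k+1}(t\lambda)\le F_k(t\mu)$ hold. The first condition rearranges explicitly to $t(\lambda-\mu)\ge k\log(1/c)$, which defines an upward closed set in $t$. For the second, set $r_k(t):=F_{k+1}(t\lambda)-F_k(t\mu)$; the plan is to show $\{t: r_k(t)\le 0\}$ has the form $[t_k,\infty)$ via three ingredients: (1)~$r_k(0)=0$ and $r_k(t)>0$ for small $t>0$, since a Taylor expansion gives leading term $(t\mu)^{k+1}/(k+1)!$; (2)~$r_k(t)<0$ for large $t$ (with $r_k(t)\to 0$), because $F_{k+1}(t\lambda)\sim e^{-t\lambda}(t\lambda)^{k+1}/(k+1)!$ is exponentially smaller than $F_k(t\mu)\sim e^{-t\mu}(t\mu)^k/k!$; (3)~$r_k$ has at most two critical points in $(0,\infty)$, because $r_k'(t)=0$ rearranges to $te^{-t(\lambda-\mu)}=(k+1)\mu^{k+1}/\lambda^{k+2}$, whose left side is unimodal in $t$. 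A short sign-chasing argument using (1)--(3) forces $r_k$ to change sign exactly once on $(0,\infty)$, giving the desired halfline. Intersecting the two halflines (for each $k$) and taking a union over $k$ then exhibits $T$ as a union of halflines, hence upward closed.

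The main obstacle is step~(3) together with the uniqueness-of-zero argument for part~(\ref{monop}); part~(\ref{monoinl}) is essentially combinatorial once one invokes (\ref{three}), but the ray monotonicity really does need to rule out that $r_k$ might oscillate---this is what makes the shaded region in Figure~\ref{graph} star-shaped about the origin.
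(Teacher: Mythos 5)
Your proposal is correct. Part~(\ref{monoinl}) is essentially the paper's own argument: both rest on the same two facts, that $F_n(a)=\P(\mathrm{Poisson}(a)\le n)$ is decreasing in $a$ and that $p_n(a)=e^{-a}a^n/n!$ is unimodal with mode at $n$. You pass through the contrapositive of condition~(iii) where the paper argues directly via~(ii); this is a cosmetic difference, not a different proof.

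Part~(\ref{monop}) is where you diverge. The paper fixes $p=\mu/\lambda$ and performs the change of variables $t\mapsto\lambda s$ inside the integral form of $F_\lambda(k+1)\le F_\mu(k)$, arriving at
\[
1 \le (k+1)\int_p^1 e^{(1-s)\lambda}s^k\,ds,
\]
whose right side is \emph{manifestly} increasing in $\lambda$ because $1-s>0$ on the range of integration. That gives the half-line structure immediately, no analysis of zeros required. You instead study $r_k(t)=F_{k+1}(t\lambda)-F_k(t\mu)$ directly: you show $r_k(0)=0$ with $r_k>0$ for small $t$ (Taylor expansion), $r_k<0$ eventually with $r_k\to 0$ (tail asymptotics), and that $r_k'$ vanishes at most twice because $r_k'(t)=0$ rearranges to $te^{-t(\lambda-\mu)}=(k+1)\mu^{k+1}/\lambda^{k+2}$ with a unimodal left side. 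The sign count then forces exactly one zero. I checked the rearrangement and the endpoint behavior; the argument is sound, including the degenerate case $k=0$ where $r_0'(0)=\mu>0$ rather than $0$. The net effect is the same half-line conclusion for each $k$, and the union over $k$ is upward closed. Your route is valid but noticeably heavier than the paper's: it trades one slick change of variables for asymptotics plus a critical-point count, and the ``short sign-chasing argument'' you defer actually needs care to rule out a tangential zero (it works, because such a tangency would cost an extra critical point). Worth knowing the shorter path exists.
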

\begin{cor}[Non-monotonicity in $\mu$]
\label{nonmono} There are positive real numbers $\lambda > \mu > \mu'$ such that
there exists a thinning from $\lambda$ to $\mu$ but not from
$\lambda$ to $\mu'$.
\end{cor}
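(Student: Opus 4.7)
The plan is to deduce the corollary from Theorem~\ref{main}(ii) by showing that, for a suitable $\lambda$, the set $M(\lambda)$ of $\mu \in (0,\lambda)$ admitting a thinning is not an initial segment of the form $(0,c]$. Writing $A_k(\lambda)$ for the set of $\mu$ at which (ii) holds at level $k$, so that $M(\lambda) = \bigcup_{k \geq 0} A_k(\lambda)$, it suffices to find $\lambda$ for which two of the $A_k$'s are disjoint intervals separated by a nonempty gap.

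First I would show that each $A_k$ is an interval with explicit endpoints. For $k = 0$, (ii)(a) is automatic and (ii)(b) reduces to $\mu \leq \lambda - \ln(1+\lambda)$, giving $A_0 = (0,\, \lambda - \ln(1+\lambda)]$. For $k \geq 1$: the function $\mu \mapsto \mu^k e^{-\mu}$ is unimodal with peak at $\mu = k$, so (ii)(a) has solutions $\mu \in (0,\lambda)$ only when $\lambda > k$, and these form an interval $[\mu_k^-, \lambda)$ with $\mu_k^- < k$. Setting $h_k(x) := e^{-x}\sum_{j=0}^k x^j/j!$, the telescoping computation $h_k'(x) = -e^{-x} x^k / k!$ shows $h_k$ is strictly decreasing, so (ii)(b) is equivalent to $\mu \leq c_k$ for a unique $c_k \in (0,\lambda)$. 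Hence $A_k = [\mu_k^-, c_k]$ (possibly empty).

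I would then take $\lambda = 5/2$. For $k \geq 3$, $\lambda < k$ forces (ii)(a) to fail, so $A_k = \emptyset$ and $M(5/2) = A_0 \cup A_1 \cup A_2$. I propose to verify, at $\lambda = 5/2$, the chain of inequalities
\[
c_1 \;<\; 31/20 \;<\; \mu_2^- \;<\; c_2,
\]
together with the elementary $c_0 = 5/2 - \ln(7/2) < 31/20$. These imply simultaneously that $31/20 \notin M(5/2)$ and that $A_2$ is nonempty; one further verifies that $33/20 \in A_2$ by checking both parts of (ii) at $k = 2$ and $\mu = 33/20$. Taking $\mu = 33/20$ and $\mu' = 31/20$ then yields the corollary.

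The main obstacle is the numerical verification of the displayed chain at $\lambda = 5/2$: the quantities differ by only a few percent (numerically, $c_1 \approx 1.541$, $\mu_2^- \approx 1.573$, $c_2 \approx 1.698$), so the estimates are tight. Each inequality reduces to comparing two explicit linear combinations of values $e^{-a}$ for rational $a \in \{31/20,\, 33/20,\, 5/2\}$, and can be established by retaining sufficiently many terms of the Taylor series for $e^{-a}$ and invoking the alternating-series bracketing. This is routine but tedious; a conceptually cleaner but strictly weaker alternative — namely a continuity argument starting from $\lambda = 2$ where $\mu_2^-(\lambda) \to 2$ while $c_1(\lambda)$ remains bounded away from $2$ — does not immediately deliver $A_2 \neq \emptyset$, and so direct verification at $\lambda = 5/2$ seems the most expedient route.
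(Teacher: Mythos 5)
Your proof is correct, and the overall strategy — exhibit a $\lambda$ for which the set of admissible $\mu$ fails to be an interval, verified by direct numerical evaluation of Theorem~\ref{main}(ii) — is the same strategy the paper uses. However, you develop more machinery than the paper does, and you end up with a numerically tighter example. The paper simply states the example $(\lambda,\mu,\mu')=(1.45,\,0.7,\,0.6)$ and observes that condition~(ii) holds with $k=1$ for $(1.45,0.7)$ while the negation in~(iii) is witnessed by $k=0$ for $(1.45,0.6)$; the relevant inequalities there have margins on the order of $2$--$5\%$, so the verification is essentially instantaneous. Your decomposition $M(\lambda)=\bigcup_k A_k(\lambda)$ with each $A_k$ an interval (from unimodality of $\mu\mapsto\mu^k e^{-\mu}$ and monotonicity of $h_k$) is a nice structural observation that is consistent with Lemma~\ref{mono} and Figure~\ref{graph}, but for this corollary it is not needed: one only has to check finitely many inequalities at one chosen triple. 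Moreover your particular choice $\lambda = 5/2$, $\mu=33/20$, $\mu'=31/20$ leads to inequalities with margins under $1\%$ (e.g.\ $c_1\approx1.542$ against $31/20=1.55$), which you correctly flag as tedious to certify rigorously by hand. So the argument is sound and the framework is illuminating, but the paper's smaller example is the more economical route; you might in any case prefer to record the paper's triple $(1.45,0.7,0.6)$ as a sanity check, since it avoids the alternating-series estimates entirely.
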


Corollary \ref{nonmono} may come as a surprise.  However, it
follows from Theorem \ref{main} by a numerical computation or
an inspection of Figure \ref{graph}.  In particular, an example
with $\leb S=1$ is $(\lambda,\mu,\mu')=(1.45, 0.7, 0.6)$, (as
may be checked by taking $k=1$ in Theorem~\ref{main}
(\ref{prob}) and $k=0$ in (\ref{three})). Furthermore, there
are examples satisfying $\lambda = n+1/2+o(1)$ and $\mu,\mu' =
n-1/2+o(1)$ as $n \to \infty$.

For $\mu >0$ define
\[
\lambda_c(\mu) := \inf\big\{\lambda>\mu: \text{there is a thinning from
  $\lambda$ to $\mu$ on [0,1]}\big\}.
\]
By Theorem \ref{main} \eqref{prob} and Corollary
\ref{auxresults} \eqref{monoinl}, there exists a thinning from
$\lambda$ to $\mu$ if any only if $\lambda\geq \lambda_c(\mu)$.

The next corollary states that there exists a thinning if the
average number of points to be deleted is at least one, while
the converse holds in asymptotic form.

\begin{cor}[Asymptotic threshold]
\label{nicecond} We have $\lambda_c(\mu) \leq \mu+1$ for all
$\mu > 0$, and $\lambda_c(\mu) \geq \mu +1 -o(1)$ as
$\mu\to\infty$
\end{cor}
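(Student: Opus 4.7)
The plan is to reduce both bounds to the combinatorial criterion of Theorem~\ref{main} and verify the resulting Poisson-probability inequalities.

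For the upper bound $\lambda_c(\mu) \le \mu+1$, I set $\lambda = \mu+1$ and aim to verify~\eqref{prob}. The natural choice is $k = \intp{1/\log(1+1/\mu)}$, i.e., the largest integer with $(\lambda/\mu)^k \le e^{\lambda-\mu}$, which is precisely the condition that makes the first inequality $\P(X=k) \le \P(Y=k)$ hold. The second inequality $\P(X \le k+1) \le \P(Y \le k)$ is equivalent to $\P(X \ge k+2) \ge \P(Y \ge k+1)$; via the Poisson--gamma duality $\P(\mathrm{Poi}(\alpha) \ge m) = \P(T_m \le \alpha)$ where $T_m$ is a sum of $m$ iid $\mathrm{Exp}(1)$ variables, this becomes
\[
\P(T_{k+2} \le \mu+1) \ge \P(T_{k+1} \le \mu).
\]
For large $\mu$, $k \approx \mu + \tfrac12$, so $\mu - k - 1$ is a bounded negative quantity, and by the central limit theorem both sides are approximately $\Phi((\mu-k-1)/\sqrt{k+j})$ for $j = 2, 1$ respectively; the larger denominator on the left produces the correct sign since the argument is negative. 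For small $\mu$ (where the CLT is not yet sharp) the inequality is verified by direct computation.

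For the lower bound $\lambda_c(\mu) \ge \mu+1-o(1)$, I fix $\e > 0$, consider $\lambda = \mu+1-\e$, and exhibit an integer $k$ satisfying both strict inequalities of~\eqref{three}. I take $k$ just above the threshold $(\lambda-\mu)/\log(\lambda/\mu)$, which is approximately $(\lambda+\mu)/2$, so the first inequality $\P(X=k+1) > \P(Y=k+1)$ is immediate. For the second, the CLT with continuity correction gives $\P(Y \le k) \approx \Phi((k+\tfrac12-\mu)/\sqrt{\mu})$ and $\P(X \le k+1) \approx \Phi((k+\tfrac12-\mu+\e)/\sqrt{\mu+1-\e})$, so the latter exceeds the former by approximately $\phi(0)\,\e/\sqrt{\mu}$, giving $\P(X \le k+1) > \P(Y \le k)$ once $\mu$ is sufficiently large in terms of $\e$.

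The hard part is the second inequality in the upper bound. The CLT handles large $\mu$ cleanly, but there is no obvious uniform inequality for all $\mu$, so one needs either case analysis (small $\mu$ handled directly, large $\mu$ via CLT) or a sharp integral comparison such as
\[
\int_\mu^{\mu+1} \frac{t^k e^{-t}}{k!}\, dt \ge \frac{(\mu+1)^{k+1} e^{-(\mu+1)}}{(k+1)!},
\]
which arises from integrating by parts in the Poisson CDF. The lower bound argument additionally requires Berry--Esseen-type error bounds to ensure the $\e/\sqrt{\mu}$ gap from the CLT dominates the Gaussian approximation error.
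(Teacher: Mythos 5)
Your reduction is correct (Theorem~\ref{main}\eqref{prob} for the upper bound with $\lambda=\mu+1$ and $k=\intp{1/\log(1+1/\mu)}$, and Theorem~\ref{main}\eqref{three} for the lower bound with $k\approx\mu+1$), and you have in fact isolated exactly the right integral comparison. But you stop there: the second inequality $\P(X\le k+1)\le\P(Y\le k)$ must hold for \emph{every} $\mu>0$, and ``small $\mu$ handled by direct computation, large $\mu$ by CLT'' is not a proof --- ``small $\mu$'' is a whole interval, not a finite check, and the CLT gives only an asymptotic approximation with no sign guarantee. You name a ``sharp integral comparison'' but never establish it, so the upper bound is left as a conjecture.

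The paper closes this gap with a short uniform argument. In the notation of \eqref{integralform}, the condition $F_\lambda(k+1)\le F_\mu(k)$ rearranges (after the substitution $t=\mu+s$ with $\lambda=\mu+1$) to
\[
\int_0^1 e^{-s}\Big(1+\tfrac{s}{\mu}\Big)^{k+1}\,ds \;\ge\; 1,
\]
and with $k+1>1/\log(1+1/\mu)$ the integrand is pointwise $\ge 1$ because
\[
\frac{\log(1+s/\mu)}{\log(1+1/\mu)}\ge s\qquad\text{for }s\in[0,1],
\]
which is just concavity of $\log$ (equivalently $\log(\mu+s)\ge(1-s)\log\mu+s\log(\mu+1)$). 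No case split, no asymptotics. For the lower bound the paper likewise avoids Berry--Esseen machinery: with $\lambda=\mu+\delta$, $\delta<1$, $k=\intp{\mu+1}$, the failure of the integral inequality follows from the crude bound $(1+s/\mu)^{k+1}\le e^{s(k+1)/\mu}\le e^{2s/\mu}$, giving $\int_0^\delta e^{2s/\mu}\,ds<1$ for $\mu$ large; the first condition $e^{-\lambda}\lambda^{k+1}>e^{-\mu}\mu^{k+1}$ reduces to $(1+\delta/\mu)^{k+1}>e^\delta$, clearly true for this $k$ once $\mu$ is large. Your CLT heuristic correctly predicts the answer, but to make it rigorous you would need to control the approximation error uniformly, which is more work than the elementary bound it is meant to replace; I would redo both halves along the integral-comparison lines you already identified, and actually prove the comparison.
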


Our construction of thinnings relies on the following key
result, which states that given $n$ unordered uniformly random
points in an interval, we may deterministically delete one of
them in such a way that the remaining $n-1$ are again uniformly
random. Write $B^{\{n\}}$ for the set of all subsets of $B$ of
size $n$.
\begin{prop}[One-point deletion] \sloppypar
\label{deletion} Let $U_1,\ldots,U_n$ be i.i.d.\ random
variables uniform on $[0,1]$, and define the random set
$\U:=\{U_1,\ldots,U_n\}$. There exists a measurable function
$g:[0,1]^{\{n\}}\to [0,1]^{\{n-1\}}$ such that $g(A)\subset A$
for all $A$, and
\[
g(\U) \eqd \{U_1,\ldots,U_{n-1}\}.
\]
Moreover, there exists a measurable $v:[0,1]^{\{n\}}\to [0,1]$ such that
$v(\U)$ is uniform on $[0,1]$ and is independent of $g(\U)$.
\end{prop}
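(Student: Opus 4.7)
The plan is to pass to the ordered setting and reduce the problem to constructing a well-chosen family of deletion sets. I would write the order statistics as $U_{(1)} < \cdots < U_{(n)}$, so that $(U_{(1)},\ldots,U_{(n)})$ has density $n!$ on the open simplex $\Delta_n := \{0 < u_1 < \cdots < u_n < 1\}$. Any deletion function $g$ is then encoded by a measurable rule $K : \Delta_n \to \{1, \ldots, n\}$ via $g(\U) = \U \setminus \{U_{(K(\U))}\}$. Comparing the input density $n!$ on $\Delta_n$ against the target output density $(n-1)!$ on $\Delta_{n-1}$ (obtained by integrating out the deleted coordinate over each possible insertion gap), one sees that $g(\U) \eqd \{U_1, \ldots, U_{n-1}\}$ holds precisely when, for every $V = (v_1, \ldots, v_{n-1}) \in \Delta_{n-1}$,
\[
\leb \bigl\{\, u \in [0,1] : K(V \cup \{u\}) \text{ selects the inserted point } u \,\bigr\} \;=\; \tfrac{1}{n}.
\]

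To achieve this, I would specify a measurable family of deletion sets $\{E(V) \subset [0,1]\}_{V \in \Delta_{n-1}}$, each of Lebesgue measure $1/n$, and define the rule implicitly: $g$ deletes the element $u \in \U$ satisfying $u \in E(\U \setminus \{u\})$. The measure condition immediately yields the correct output distribution, so the nontrivial task is to choose $E$ so that for (almost) every $\U$ exactly one element $u \in \U$ satisfies the defining condition---i.e., so that the rule is well-defined. For $n = 2$, a clean choice is $E(\{v\}) := (v, v + \tfrac{1}{2}] \pmod{1}$ (the arc of length $1/2$ immediately clockwise of $v$, viewing $[0,1]$ as a circle): a direct case analysis shows that for $\U = \{u_1, u_2\}$ with $u_1 < u_2$, exactly one of $u_1 \in E(\{u_2\})$ and $u_2 \in E(\{u_1\})$ holds, depending on whether $u_2 - u_1$ exceeds $1/2$. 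For general $n$ this scheme must be carried out using the full spacing structure of $\U$, and designing such a family with the uniqueness property is the main technical obstacle I expect.

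With $g$ in hand, the extracted uniform $v$ follows quickly. Conditional on $g(\U) = V$, the deleted element $u^* := U_{(K(\U))}$ has density $n!/(n-1)! = n$ on $E(V)$ and vanishes off $E(V)$, so $u^* \mid V$ is uniform on the measure-$1/n$ set $E(V)$. Setting
\[
v(\U) \;:=\; n \cdot \leb\bigl(E(V) \cap [0, u^*]\bigr)
\]
---the normalized conditional CDF of $u^*$ on $E(V)$---makes $v(\U)$ uniform on $[0,1]$ conditional on every $V$, which simultaneously yields $v(\U)$ uniform on $[0,1]$ and independent of $g(\U)$, as desired.
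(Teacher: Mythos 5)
Your reduction is exactly the right one, and your measure criterion is the content of the paper's Lemma~\ref{push}: a deletion rule produces the correct output law if and only if, for almost every $(n-1)$-point configuration $V$, the set of $u$ whose insertion into $V$ gets deleted has Lebesgue measure $1/n$. Your extraction of $v$ as the normalized conditional CDF of the deleted point is also correct, and matches the idea used in the paper's rotation-equivariant construction. However, the proposal stops precisely where the real work begins: you explicitly leave open the construction, for general $n$, of a measurable family $E(V)$ of measure-$1/n$ sets with the property that for almost every $n$-set $\U$ exactly one $u\in\U$ satisfies $u\in E(\U\setminus\{u\})$. Exhibiting such a family \emph{is} the proposition; the measure condition alone does not force the uniqueness (well-definedness) property, and there is no reason it should hold for a generic choice of $E$.

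The paper closes this gap in two different ways, neither of which appears in your proposal. The first (following Michael Brand) uses the Borel isomorphism to replace $[0,1]$ by $\{1,\ldots,n\}\times[0,1]\times[0,1]$, writing $U_i=(X_i,Y_i,Z_i)$; setting $K\equiv\sum_i X_i \pmod n$ and deleting the point with the $K$th smallest $Y$-coordinate, one finds that for a.e.\ $(y',z')$ and any $(n-1)$-set $B$ there is a \emph{unique} label $x'$ making $(x',y',z')$ the deleted point, so both the measure-$1/n$ and uniqueness requirements hold automatically and $v(\U):=Z'$ serves as the spare uniform. The second (Proposition~\ref{rot}) identifies $[0,1]$ with $S^1$ and invokes the gas-station cycle lemma: with $1/n$ fuel at each point, the a.s.\ unique station from which one can complete a circuit is deleted, and the set of insertion locations $u$ from which a circuit is possible given $B$ is shown to have measure exactly $1/n$. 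Your $n=2$ rule $E(\{v\})=(v,v+\tfrac12]\pmod 1$ is in fact the $n=2$ case of this gas-station rule, but the extension to $n\ge 3$ is the nontrivial step that you would still need to supply.
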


Even in the case $n=2$, the first claim of Proposition \ref
{deletion} is far from obvious, and makes an entertaining
puzzle.  Of course, the claim would be trivial if we allowed
$g$ to be a function of the {\em ordered} tuple
$(U_1,\ldots,U_n)$, or a function of $\U$ together with an
independent roll of an $n$-sided die.

The function $v$ in Proposition~\ref{deletion} may be thought
of as extracting ``spare'' randomness associated with the
location of the deleted point $\U\setminus g(\U)$.  This will
be useful in the proof of Theorem \ref{main}, because it will
make it easy to delete a random number of further points once
one point has been deleted.

Proposition~\ref{deletion} is somewhat reminiscent of the
following fact proved in \cite{h-p-extra} (although the proofs
appear unrelated).  Given a homogeneous Poisson process $\Pi$
on $\R^d$, it is possible to choose a point $W$ of $\Pi$, as a
deterministic function of $\Pi$, so that deleting $W$ and
translating the remaining points by $-W$ yields again a
homogeneous Poisson process.

Proposition~\ref{deletion} motivates the search for couplings
of Poisson random variables $X$ and $Y$ such that either
$X=Y=0$ or $X>Y$.  An important observation of Ball \cite[Lemma
3.1]{ball} is that the standard ``quantile coupling'' (i.e.\
$X=F_X^{-1}(U)$ and $Y=F_Y^{-1}(U)$ where $F_X,F_Y$ are the
distribution functions and $U$ is uniform on $[0,1]$) has this
property provided the mean of $X$ is sufficiently large as a
function of the mean of $Y$. More generally, given a coupling
of Poisson random variables $X,Y$ with means $\lambda,\mu$ such
that $X>Y$ except on an event $A \in \sigma(X)$ on which $X =
Y$, it is not difficult to show using
Proposition~\ref{deletion} that there exists a thinning from
$\lambda$ to $\mu$. Condition~(\ref{prob}) of
Theorem~\ref{main} implies the existence of such a coupling.

\begin{remark}[Infinite volumes]\label{partone}
In the case of infinite volumes it is easier (but still
nontrivial) to show that a Poisson thinning from $\lambda$ to
$\mu$ always exists when $\lambda > \mu$; see \cite[Example
2]{HLS}.  Our results yield the following alternative
construction, with the additional property that whether or not
a point is deleted is determined by the process within a fixed
finite radius. Partition $\R^d$ into cubes of volume
$1/(\lambda - \mu)$. By Corollary~\ref{nicecond} there exists a
thinning on each cube from $\lambda$ to $\mu$; by applying each
simultaneously we obtain a thinning on all of $\R^d$.
%
\end{remark}

The paper is organized as follows. In Section \ref{later} we
will prove some easier parts of Theorem \ref{main}. In Section
\ref{uniform} we will prove Proposition \ref{deletion}. In
Section \ref{coupling} we will define the coupling of Poisson
random variables that will be used to prove the existence of a
thinning. In Section \ref{proof} we will finish the  proof of
Theorem \ref{main} and also prove the corollaries.   Finally in
Section \ref{open} we will briefly address some variant
concepts, including deterministic thinnings that are
equivariant with respect to a group of isometries, and
deterministic {\em splittings}, where the points of the Poisson
point process are partitioned into two sets each of which forms
a Poisson point process.  We will also address deterministic
{\em thickening}: we show that on a finite volume, it is
impossible to add points, as a deterministic function of a
Poisson point process, to obtain a Poisson point process of
higher intensity.

\medskip
{\bf Acknowledgements.} We thank Michael Brand for valuable comments.

\section{Proof of Theorem \ref{main}:  easy implications}
\label{later}

We will prove Theorem \ref{main} by showing that for the
existence of a thinning as in (\ref{exist}), condition
(\ref{three}) is necessary, (\ref{prob}) is sufficient, and
(\ref{three}) implies (\ref{prob}).

Let $\M$ be the space of all simple point measures on $[0,1]$.
For $\nu \in \M$, we denote the support of $\nu$ by
\[ [\nu]:= \ns{x \in [0,1] : \nu(\ns{x}) = 1}.  \]
Let $\N=\{0,1,\ldots\}$. For each $n \in \N$, let
$\M_n:=\ns{\nu \in \M : \nu([0,1]) =n}$. The following
characterization is useful.   A point process $\Pi$ on $[0,1]$
is a Poisson point process of intensity $\lambda$ if and only
if: the random variable $\Pi([0,1])$ is Poisson with mean
$\lambda$, and, for each $n\in \N$, conditional on
$\Pi\in\M_n$, the set $[\Pi]$ has the distribution of $\ns{U_1,
\ldots, U_n}$, where $U_1, \ldots, U_n$ are i.i.d.\ random
variables uniformly distributed on $[0,1]$. See {\cite[Theorem
1.2.1]{MR1199815}} or {\cite{kingman}} for background.

\begin{proof}[Proof of Theorem \ref{main}: \eqref{exist} $\implies$
  \eqref{three}]
Let $\Pi$ be a Poisson point process on $[0,1]$ with mean
$\lambda$ and  let $f$ be a thinning from $\lambda$ to $\mu.$
Set $X:= \Pi([0,1])$ and $Y:=f(\Pi)([0,1])$ and let $k\in\N$ be
such that
\begin{equation}\label{firstthree}
\P(X = k+1) > \P(Y=k+1).
\end{equation}
We will show that
\begin{equation}\label{density}
\P\big(X = Y = k+1 \big) = 0.
\end{equation}
In other words, if on the event $X = k+1$ the thinning $f$
{\em sometimes} deletes points of $\Pi$, then it must (almost)
{\em always} delete points.  Since $X \geq Y$, \eqref{density} is
inconsistent with
\[ \P(X \leq k+1) > \P(Y \leq k).\]
Thus if there exists a thinning then condition (\ref{three}) holds.

It remains to show \eqref{density}.  Let $Q$ the law of
$\ns{U_1, \dots, U_{k+1}}$ where the $U_i$ are i.i.d.\ uniform
in $[0,1]$. Let $\J:=\ns{ \nu \in \M_{k+1}: f(\nu) = \nu}$.
Thus, $\J$ is a set of measures where $f$ does {\em{not}}
delete any points.  Let $[\J]:= \ns{ [\nu]: \nu \in \J}$, so
that
\[
\P(\Pi \in \J) = \P(X=k+1) \cdot Q([\J]),
\]
and also
\[
\P(f(\Pi) \in \J) = \P(Y=k+1) \cdot Q([\J]).
\]
Since $\ns{\Pi \in \J} \subseteq \ns{f(\Pi) \in \J}$, we deduce
\begin{equation}\label{threetwo}
\P(X=k+1) \cdot Q([\J]) \leq \P(Y=k+1) \cdot Q([\J]).
\end{equation}
We see that \eqref{firstthree} and \eqref{threetwo} force
$Q([\J]) = 0$.  Hence  $\P( \Pi \in \J) = 0$, which implies
\eqref{density}.
\end{proof}

\begin{proof}[Proof of Theorem \ref{main}: \eqref{three} $\implies$
  \eqref{prob}]
Since $\lambda > \mu$ we have $\P(X = 0) < \P(Y = 0)$, and
since $\sum_{i \in \N} \P(X = i) = \sum_{i \in \N} \P(Y = i) =
1$, there exists a minimal integer $k\geq 0$ such that
\[ \P(X = k+1) > \P(Y = k+1). \]
By condition (\ref{three}) we must have that
\[ \P(X \leq k+1) \leq \P(Y \leq k).\]
By the minimality of $k$ we have that
\begin{equation*}
\P(X = k) \leq \P(Y = k).  \qedhere
\end{equation*}
\end{proof}

It remains to prove that \eqref{prob} implies \eqref{exist} in
Theorem \ref{main}, which we will do in Section \ref{proof}
after assembling the necessary tools.  Our strategy for
constructing the thinning $f$ will be as follows.  If the
number of points $\Pi(S)$ is at most $k$, we retain all of
them; otherwise, we first delete one point using
Proposition~\ref{deletion}, then delete a suitable random
number of others.

\section{Deleting uniform random variables}
\label{uniform}

We will give two proofs of Proposition~{\ref{deletion}}. Our original proof
is given in Section~\ref{open} and gives a function $g$ with an additional
rotation-equivariance property. The proof below follows a suggestion of
Michael Brand; a version appears on his web page of mathematical puzzles
\cite[March 2009]{mickey}. Both proofs rely on the following observation.

\begin{lemma}\label{push}
Let $Q$ be a probability measure on an arbitrary Borel space
$S$. Let $Q_m$ be the law of $\{U_1,\ldots,U_m\}$, where the
$U_i$ are i.i.d.\ with law $Q$.  Let $g:S^{\{n\}}
  \to S^{\{n-1\}}$ be measurable, and define for $B\in
  S^{\{n-1\}}$,
  \[
  R(B) = \big\{ w\in S : g(B\cup\{w\}) = B \big\}.
  \]
  If for $Q_{n-1}$-a.e.\ $B\in S^{\{n-1\}}$ we have $Q(R(B)) =
  n^{-1}$, then $Q_n\circ g^{-1} = Q_{n-1}$.
\end{lemma}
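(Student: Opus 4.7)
The plan is to fix an arbitrary measurable $A \subseteq S^{\{n-1\}}$ and compute a single joint probability in two ways to conclude $Q_n(g^{-1}(A)) = Q_{n-1}(A)$. I would enlarge the probability space as follows: take $\U \sim Q_n$, pick $W$ uniformly at random from the $n$ points of $\U$, and set $\V := \U \setminus \{W\}$.

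The first observation is the exchangeability identity $(\V, W) \sim Q_{n-1} \otimes Q$. Writing $\U = \{U_1, \ldots, U_n\}$ with $U_i$ i.i.d.\ $Q$ and $N$ an independent uniform index on $\{1, \ldots, n\}$, symmetry of the $U_i$ gives that $(\U \setminus \{U_N\}, U_N)$ has the same law as the non-random pair $(\{U_1, \ldots, U_{n-1}\}, U_n)$, which is visibly a product.

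The core calculation is to evaluate $p := \P\bigl( g(\U) \in A,\ g(\U) = \V \bigr)$ two ways. Conditioning on $\U$, the value $g(\U)$ is deterministic and the event $\{g(\U) = \V\}$ has conditional probability $\frac{1}{n}$ when $g(\U) \subseteq \U$ and $0$ otherwise, yielding $p = \tfrac{1}{n}\, \P( g(\U) \in A,\ g(\U) \subseteq \U )$. On the other hand, expressing $\U = \V \cup \{W\}$ and using the product structure of $(\V, W)$, the event $\{g(\V \cup \{W\}) = \V\}$ equals $\{W \in R(\V)\}$; by independence and the hypothesis $Q(R(B)) = n^{-1}$ for $Q_{n-1}$-a.e.\ $B$, this integrates to $p = \tfrac{1}{n} Q_{n-1}(A)$. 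Equating yields $\P(g(\U) \in A,\ g(\U) \subseteq \U) = Q_{n-1}(A)$; taking $A = S^{\{n-1\}}$ then forces $g(\U) \subseteq \U$ almost surely, so in fact $Q_n(g^{-1}(A)) = Q_{n-1}(A)$ for every $A$.

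I do not expect any substantive obstacles: the heart of the argument is Fubini dressed up to take advantage of the hypothesis, and the one step requiring genuine care is the exchangeability identity $(\V, W) \sim Q_{n-1} \otimes Q$, which is essentially the observation that ``pick a uniform $n$-sample and mark one point uniformly'' produces the same pair as ``pick an independent $(n-1)$-sample and an extra independent point.''
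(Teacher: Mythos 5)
Your proof is correct and follows essentially the same route as the paper's: both hinge on the exchangeability observation that deleting a uniformly chosen point from an $n$-sample produces an independent pair $(\V,W)\sim Q_{n-1}\otimes Q$, and then a Fubini computation over the sets $R(B)$. One small point in your favor: you explicitly track the event $\{g(\U)\subset\U\}$ and dispose of it at the end by taking $A=S^{\{n-1\}}$, whereas the paper's displayed identity $\P(g(\U_n)\in\A)=n\,\P(g(\U_n)=\U_{n-1}\in\A)$ tacitly assumes $g(\U_n)\subset\U_n$ a.s.\ (and its stated ``stronger fact'' about the Radon--Nikodym derivative implicitly presumes absolute continuity); your version makes that step airtight while reaching the same conclusion.
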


\begin{proof}
We prove the stronger fact that the Radon-Nykodim derivative $d
(Q_n\circ g^{-1}) / d Q_{n-1}$ is $Q_{n-1}$-a.e.\ equal to $n\,
Q(R(\cdot))$ (without any assumption on $Q(R(B))$).

Let $U_1,\ldots,U_n$ be i.i.d.\ with law $Q$ and write
$\U_m=\{U_1,\ldots ,U_m\}$.  Let $\A\subseteq S^{\{n-1\}}$ be
measurable. Since the $U_i$ are exchangeable,
$$
Q_n\circ g^{-1}(\A)
= \P\big(g(\U_n)\in \A\big)
= n \,\P\big(g(\U_n)=\U_{n-1}\in \A\big).
$$
We have the identity of events
\[ \big\{g(\U_n)=\U_{n-1}\in \A \big\}=
 \big\{U_n \in R(\U_{n-1})\big\} \cap \{\U_{n-1} \in \A\}.
\]
Therefore, since $\U_{n-1}$, $U_n$ have respective laws
$Q_{n-1}$, $Q$,
\[
  Q_n \circ g^{-1}(\A) = \int_\A n\, Q(R(B)) \;d Q_{n-1}(B).   \qedhere
\]
\end{proof}

\begin{proof}[Proof of Proposition \ref{deletion}]
By the Borel isomorphism theorem we may assume that the $U_i$
are i.i.d.\ uniform in $S = \{1,\dots,n\}\times [0,1]\times
[0,1]$ instead of in $[0,1]$, and write $U_i = (X_i,Y_i,Z_i)$.
Let $Q_m$ be as in Lemma~\ref{push}.

Let $K$ be the $\{1,\ldots,n\}$-valued random variable given by
$$K \equiv \sum_{i=1}^n X_i \quad \mod n.$$
Let $W = (X',Y',Z')$ be the element of $\U$ that has the $K$th
smallest $Y_i$. Define
$$
g(\U) = \U \setminus \{W\}; \qquad v(\U) = Z'.
$$

Since the $X_i$'s, $Y_i$'s and $Z_i$'s are all independent it
is clear that $v(\U)$ is uniform on $[0,1]$ and independent of
$g(\U)$. It remains to show that $g(\U)$ has law $Q_{n-1}$.

To see this, let $B \in S^{\{n-1\}}$ and observe that for a.e.\
$y',z'\in[0,1]$ there is a unique $w=(x',y',z')\in S$ so that
$g\big( B \cup \{w\} \big) = B$. Thus
\[
Q_1 \Big\{w \in S: g\big(B \cup \{w\}\big) = B \Big\} = n^{-1}.
\]
It follows from Lemma~\ref{push} that $Q_n(g^{-1}(\cdot)) =
Q_{n-1}(\cdot)$, as required.
\end{proof}

The following corollary of Proposition \ref{deletion} states
how the ``spare'' randomness will be utilized in the proof of
Theorem \ref{main}.    Write $B^{\{<n\}}$ for the set of all
subsets of $B$ of size strictly less than $n.$

\begin{cor}
\label{cordel} Let $U_1,\ldots,U_n$ be i.i.d.\ random
variables uniformly distributed on $[0,1]$, and define the
random set $\U:=\{U_1,\ldots,U_n\}$. Let $Z$ be any
$\ns{0, \ldots, n-1}$-valued random variable that is
independent of $\U$. There exists a measurable function
$h:[0,1]^{\{n\}}\to [0,1]^{\{<n\}}$ such that $h(A)\subset A$
for all $A$, and
\[h(\U) \eqd \{U_1,\ldots,U_{Z}\}. \]
\end{cor}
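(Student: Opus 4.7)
The plan is to iterate Proposition~\ref{deletion}, using the ``spare'' uniform $v(\U)$ to simulate the random variable $Z$ and then using repeated applications of $g$ to delete the appropriate number of points.

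In more detail: for each $m\in\{1,\ldots,n\}$ let $g_m$ and $v_m$ denote the functions from Proposition~\ref{deletion} applied to sets of size $m$, and write $Q_m$ for the law of $\{U_1,\ldots,U_m\}$. Choose any measurable $F:[0,1]\to\{0,\ldots,n-1\}$ such that $F(V)\eqd Z$ whenever $V$ is uniform on $[0,1]$; such $F$ exists since $Z$ is supported on a finite set (take, e.g., the quantile function of $Z$). Set $V:=v_n(\U)$ and $Z':=F(V)$, and define
\[
h(\U) := \bigl(g_{Z'+1}\circ g_{Z'+2}\circ\cdots\circ g_n\bigr)(\U),
\]
interpreted as $g_n(\U)$ when $Z'=n-1$ and as the empty set when $Z'=0$. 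Since each $g_m$ sends a set to a subset of itself, we have $h(\U)\subset\U$ for every realization.

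To verify the distributional claim, I would argue inductively that for each $m\le n-1$, the set $(g_{m+1}\circ\cdots\circ g_n)(\U)$ has law $Q_m$ and, being a deterministic function of $g_n(\U)$, is independent of $V$ by Proposition~\ref{deletion}; in particular it is independent of $Z'=F(V)$. Hence, conditional on $Z'=z$, the set $h(\U)$ has law $Q_z$. Averaging over $Z'\eqd Z$, the unconditional distribution of $h(\U)$ is $\sum_{z}\P(Z=z)\,Q_z$, which is exactly the law of $\{U_1,\ldots,U_Z\}$.

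The main (and only) obstacle is careful bookkeeping of independence across the successive deletions. No new idea beyond Proposition~\ref{deletion} is required: the crucial feature is precisely that the uniform $v(\U)$ is independent of $g(\U)$, so this extra randomness can be recycled into the choice $Z'$ of how many further deletions to perform while leaving the iterated deletion argument intact.
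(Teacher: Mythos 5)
Your proposal is correct and does essentially what the paper does, with one modest variation worth noting. Both proofs apply Proposition~\ref{deletion} once to the full set $\U$ to obtain the pair $(g_n(\U), v_n(\U))$ with $v_n(\U)$ uniform and independent of $g_n(\U) \eqd \{U_1,\dots,U_{n-1}\}$, and both then use $v_n(\U)$ to simulate $Z$ via a quantile transform. Where you diverge is in how the remaining $n-1-Z'$ deletions are performed: the paper constructs a function $\widehat{h}(B,v)$ by splitting $v$ into two independent uniforms, using one to choose a uniformly random ordering of $B$ and the other to generate $Z$, then keeping the first $Z$ points of the ordering; you instead iterate the \emph{deterministic} deletion map, $g_{Z'+1}\circ\cdots\circ g_{n-1}$, applied to $g_n(\U)$, which pushes $Q_{n-1}$ down to $Q_{Z'}$ with no further randomization. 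Your version buys you one small simplification --- you do not need to decompose $v$ into two independent uniforms, since the ``which points'' question is answered by iterating $g$ rather than by a random permutation --- at the cost of invoking Proposition~\ref{deletion} for every $m\le n$ rather than once. The independence bookkeeping you flag is handled correctly: each $(g_{m+1}\circ\cdots\circ g_n)(\U)$ is a measurable function of $g_n(\U)$, hence independent of $V=v_n(\U)$ and so of $Z'=F(V)$, which is exactly what lets you condition on $Z'=z$ and conclude that the conditional law is $Q_z$. One trivial remark: no special convention is actually needed at $Z'=0$, since $g_1\circ\cdots\circ g_n$ already returns $\emptyset$.
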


\begin{proof}
Define the random set $\U_{n-1}:=\{U_1,\ldots,U_{n-1}\}$.  Let
$V$ be uniformly distributed on $[0,1]$ and independent of
$(U_1, \ldots, U_{n-1})$.  Since $Z < n$,  there exists a
measurable $\widehat{h}: [0,1]^{\{n-1\}}\times[0,1]\to
[0,1]^{\{<n\}}$ such that $\widehat{h}(\U_{n-1},V) \eqd
\ns{U_1, \ldots, U_{Z}}$ and
$\widehat{h}(\U_{n-1},V)\subseteq\U_{n-1}$; to construct such
an $\widehat{h}$, use $V$ to randomly order $\U_{n-1}$ and
independently construct $Z$ with the correct distribution, and
then select the first $Z$ points in the ordering.  Now let $g$
and $v$ be as in Proposition \ref{deletion}, so that
$(g(\U),v(\U)) \eqd (\U_{n-1},V)$. Define $h(\U): =
\widehat{h}(g(\U), v(\U))$.
\end{proof}

\section{Couplings of Poisson Random Variables}
\label{coupling}

In this section we will show that
condition \eqref{prob} of Theorem \ref{main} implies the
existence of a certain coupling of Poisson random variables
that will be used to construct thinnings.

We need  the following simple result which implies that each of the
two families of curves in Figure \ref{graph} is
non-intersecting.

\begin{lemma}[Non-intersection]
\label{mono} Let $X,Y$ be Poisson random variables with
respective means $\lambda,\mu$, where $\lambda>\mu$.  For every
integer $k\geq 0$,
\begin{enumerate}[(i)]
\item \label{easy} $\P\big(X=k+1\big)\leq \P\big(Y=k+1\big)$ implies
    $\P\big(X=k\big)\leq \P\big(Y=k\big)$;
\item \label{graphmeet} $\P\big(X\leq k+1\big)\leq \P\big(Y\leq k\big)$
    implies $\P\big(X\leq k+2\big)\leq \P\big(Y\leq k+1\big)$.
\end{enumerate}
\end{lemma}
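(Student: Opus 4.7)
Part (i) reduces to a one-line manipulation of the Poisson pmf. After clearing $(k+1)!$, the hypothesis reads $e^{-\lambda}\lambda^{k+1} \leq e^{-\mu}\mu^{k+1}$; dividing by $\lambda$ and using $0<\mu<\lambda$ yields
\[
  e^{-\lambda}\lambda^k \;\leq\; \frac{\mu}{\lambda}\,e^{-\mu}\mu^k \;\leq\; e^{-\mu}\mu^k,
\]
which is the conclusion after dividing by $k!$. So I would just carry out this algebraic step.

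For part (ii) the plan is to convert both the hypothesis and the conclusion into statements about the tail integrals $\int_\mu^\lambda t^m e^{-t}\,dt$ by way of the representation $\P(X\leq n) = \int_\lambda^\infty t^n e^{-t}/n!\,dt$ (and similarly for $Y$). Splitting $\int_\mu^\infty = \int_\mu^\lambda + \int_\lambda^\infty$ gives
\[
  \P(Y\leq n)-\P(X\leq n+1) \;=\; \frac{1}{n!}\int_\mu^\lambda t^n e^{-t}\,dt \;-\; \frac{\lambda^{n+1} e^{-\lambda}}{(n+1)!},
\]
and a single integration by parts (with $u=t^{n+1}$, $dv=e^{-t}\,dt$) rewrites the right-hand side as $H_{n+1}/(n+1)!$, where
\[
  H_m \;:=\; \int_\mu^\lambda t^m e^{-t}\,dt \;-\; \mu^m e^{-\mu}.
\]
Thus the hypothesis of (ii) reads $H_{k+1}\geq 0$ and the conclusion reads $H_{k+2}\geq 0$.

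With this reformulation the implication follows immediately because $t\geq\mu$ on the interval of integration:
\[
  H_{k+2} - \mu H_{k+1} \;=\; \int_\mu^\lambda (t-\mu)\, t^{k+1} e^{-t}\,dt \;\geq\; 0,
\]
and hence $H_{k+2} \geq \mu H_{k+1} \geq 0$, as desired.

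I expect the main obstacle to be identifying this reformulation. A direct, sum-level attempt — subtract the hypothesis from the conclusion and try to dominate the leftover $\P(X=k+2) - \P(Y=k+1)$ by the slack in the hypothesis — succeeds only in the subregion where $\P(X=k+2)\leq \P(Y=k+1)$, and stalls in the complementary region, where one genuinely has to use the slack. The integral form handles both subregions uniformly through the single positivity statement $t-\mu \geq 0$ on $[\mu,\lambda]$.
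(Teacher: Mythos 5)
Your proof is correct and takes essentially the same approach as the paper: after rewriting the hypothesis of (ii) as the positivity of an integral over $[\mu,\lambda]$ (your $H_{k+1}\ge 0$ is the paper's $1\le\int_\mu^\lambda e^{\mu-t}(t/\mu)^{k+1}dt$ after multiplying by $e^{-\mu}\mu^{k+1}$), the monotonicity in $k$ follows from $t\ge\mu$ on the integration range — your inequality $H_{k+2}\ge\mu H_{k+1}$ is exactly the paper's observation that the integral is increasing in $k$, just with a different normalization. Part (i) is likewise the same one-line algebra.
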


The following fact will be useful in the proof of Lemma
\ref{mono}, and elsewhere.  If $X$ is a Poisson random variable
with mean $\lambda$, then $\P(X\leq n)$ is the probability that
the $(n+1)$st arrival in a standard Poisson process occurs
after time $\lambda$, so
\begin{equation}
\label{integralform}
\P(X\leq n) = \frac{1}{n!}\int_\lambda^\infty e^{-t}t^n dt.
\end{equation}

\begin{proof}[Proof of Lemma \ref{mono}]
Let $X,Y$ be Poisson with respective means $\lambda,\mu$, where
$\lambda>\mu$.  Part (i) is easy to check:
\begin{align*}
  \P(X = k) &= \tfrac{k+1}{\lambda}\;\P(X = k+1) \\
  &\leq \tfrac{k+1}{\lambda}\;\P(Y = k+1) = \tfrac{\mu}{\lambda} \;\P(Y =
  k) < \P(Y=k).
\end{align*}

For \eqref{graphmeet}, using \eqref{integralform}, the following
inequalities are all equivalent:
\begin{align*}
\P(X\leq k+1)&\leq \P(Y\leq k); \\
\P(X\leq k+1)&\leq \P(Y\leq k+1)-\P(Y=k+1); \\
\tfrac{1}{(k+1)!}\int_\lambda^\infty e^{-t}t^{k+1}\;dt
&\leq \tfrac{1}{(k+1)!}\int_\mu^\infty e^{-t}t^{k+1}\;dt
- \tfrac{1}{(k+1)!} e^{-\mu}\mu^{k+1}; \\
e^{-\mu} \mu^{k+1} &\leq \int_\mu^\lambda e^{-t} t^{k+1} \;dt; \\
1 &\leq \int_\mu^\lambda e^{\mu-t} \Big(\frac t\mu\Big)^{k+1}\; dt.
\end{align*}
But the right side of the last inequality is clearly increasing
in $k$.
\end{proof}

\begin{cor}[Monotone coupling]
\label{quantile} If condition (\ref{prob}) of Theorem
{\ref{main}} is satisfied by Poisson random variables $X$ and
$Y$ and an integer $k$, then there exists a coupling of $X$ and
$Y$ with the following properties.
\begin{enumerate}[(i)]
\item
The coupling is monotone; that is $X \geq Y$.
\item
If $X \leq k$, then $X=Y$.
\item
If $X > k$, then $X > Y$.
\end{enumerate}
\end{cor}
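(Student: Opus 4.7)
The plan is to build the coupling in two stages, one on $\{X\leq k\}$ where $X=Y$ and one on $\{X>k\}$ where $X>Y$ strictly. First I would unpack the full content of condition~\eqref{prob} by iterating Lemma~\ref{mono}: applying part~\eqref{easy} downward from $j=k$ yields $\P(X=j)\leq \P(Y=j)$ for every $j\leq k$, while applying part~\eqref{graphmeet} upward from $m=k$ yields $\P(X\leq m+1)\leq \P(Y\leq m)$ for every $m\geq k$.

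On the event $\{X\leq k\}$ I would simply set $\P(X=Y=j):=\P(X=j)$ for $j\in\{0,\ldots,k\}$; the first family of inequalities above guarantees that this does not over-consume any $Y$-atom, so it is admissible. Clauses (i) and (ii) of the corollary hold automatically on this part of the probability space.

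It then remains to couple two residual sub-probability measures on $\N$, each of total mass $\P(X>k)$: the $X$-excess $\mu(\{i\}):=\P(X=i)\,\ind_{\{i\geq k+1\}}$ and the $Y$-deficit $\nu(\{j\}):=\P(Y=j)-\P(X=j)\,\ind_{\{j\leq k\}}$, in such a way that the joint law is supported on $\{X>Y\}$. By Strassen's theorem (or equivalently an explicit quantile construction after shifting $\nu$ upward by one), such a coupling exists if and only if
\[
\mu\big(\{0,\ldots,m+1\}\big)\leq \nu\big(\{0,\ldots,m\}\big)\qquad\text{for every } m\geq 0.
\]
For $m<k$ the left side is zero, and for $m\geq k$ a short telescoping shows that the common term $\P(X\leq k)$ cancels on both sides, reducing the condition to $\P(X\leq m+1)\leq \P(Y\leq m)$, which is exactly the second family derived in the first step.

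The main obstacle is really just bookkeeping at the boundary value $m=k$, where the matched and unmatched portions of $\nu$ have to be separated correctly; every other step is either a routine stochastic-dominance coupling or a direct application of Lemma~\ref{mono}.
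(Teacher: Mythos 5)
Your proof is correct and follows essentially the same strategy as the paper: both decompose the coupling into a matched part on $\{X\leq k\}$ and a strict-dominance coupling of the residual mass on $\{X>k\}$, with the latter reduced (via Strassen / quantile coupling) to the cumulative inequalities $\P(X\leq m+1)\leq\P(Y\leq m)$ for $m\geq k$, obtained from condition \eqref{prob} by iterating Lemma~\ref{mono}. The paper packages the residuals into explicit auxiliary variables $V$ and $W=(X-1)\one{X>k}$ rather than working with sub-probability measures, but the underlying computation and the use of stochastic domination are identical.
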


Before proving Corollary {\ref{quantile}} we recall that if $W$
and $V$ are real-valued random variables then $W$
stochastically dominates $V$ if and only if there exists a
coupling of $W$ and $V$ such that $W \geq V$ a.s.  See e.g.\
\cite[Chapter 1]{MR1741181} and \cite{strassen} for background.

\begin{proof}[Proof of Corollary \ref{quantile}]
Let $X$ and $Y$ be Poisson random variables that satisfy
condition \eqref{prob} of Theorem \ref{main} with some integer
$k$. Applying Lemma~\ref{mono}, we obtain that
\begin{equation}
\label{assone}
\P(X = j) \leq \P(Y=j) \ \text{for all} \ 0 \leq j \leq k
\end{equation}
and
\begin{equation}
\label{asstwo}
\P(X \leq j+1) \leq \P(Y \leq j) \ \text{for all} \ j \geq k.
\end{equation}
By \eqref{assone}, we may define a probability mass function
$m$ on $\N$ as follows:
\begin{eqnarray*}
m(j)&:=& \begin{cases}
\P(Y=0) - \P(X=0) + \P(X \leq k)  & j=0;\\
\P(Y=j) - \P(X=j)  &  1 \leq j \leq k;\\
\P(Y=j) & j > k.
\end{cases}
\end{eqnarray*}
Let $V$ be a random variable with mass function $m$. Also let
$W:= (X-1)\one{X>k}$.  By \eqref{assone} and \eqref{asstwo}, it
is straightforward to check that $W$ stochastically dominates
$V$, so we may assume that $W\geq V$.  On $X\leq k$ we have
$W=0$ and therefore $V=0$, hence we have the equality $V=V
\one{X>k}$. Now define a random variable
$$Y':=X \one{X \leq k}+V \one{X>k}.$$
The mass function of $Y'$ is obtained by adding those of
$X \one{X \leq k}$ and $V$, except at $0$, and it follows that
$Y' \eqd Y$. Therefore we may assume that $Y'=Y$. On the other
hand we may write
$$X= X \one{X \leq k} + (W+1)\one{X > k}.$$
By comparing the last two displays it is evident that the
required properties (ii) and (iii) hold, and (i) is a
consequence of them.
\end{proof}

\section{The thinning, and proofs of corollaries}
\label{proof}

\begin{proof}[Proof of Theorem \ref{main}: (\ref{prob}) $\implies$
(\ref{exist})] \sloppypar
Assuming condition \eqref{prob} we construct a thinning $f$.
Let $k$ be an integer satisfying condition \eqref{prob}. Let
$\Pi$ be a Poisson point process on $[0,1]$ with intensity
$\lambda$. Write $X=\Pi([0,1])$; thus $X$ is a Poisson random
variable with mean $\lambda$. Let $Y$ be a coupled Poisson
random variable with mean $\mu$ so that $X$ and $Y$ satisfy the
conclusion of Corollary \ref{quantile}. We will define $f$ so
that $f(\Pi)([0,1]) \eqd Y$.

For each $n \geq 0$, let $Q_n$ be the law of $Y$ conditional on
$X = n$. Let $Z_n$ be independent of $\Pi$ and have law $Q_n$.
By Corollary \ref{quantile}, if $n > k$, then $Y < n$ a.s.
For each $n > k$, let $h_n: [0,1]^{\ns{n}} \to [0,1]^{\ns{<n}}$
be the function from Corollary \ref{cordel} corresponding to
the random variable $Z_n$. Let $f$ be defined by:
\begin{eqnarray*}
[f(\Pi)] &:=& \begin{cases}
[\Pi] & \text{if} \ X \leq k;\\
h_{n}([\Pi])  & \text{if} \ X = n > k.
\end{cases}
\end{eqnarray*}

By Corollary \ref{quantile}, we have  $f(\Pi)([0,1])  \eqd
Y$.  In addition, from Corollary {\ref{cordel}} we have that
for all $m \geq 0$,  conditional on the event that
$f(\Pi)([0,1])=m$, the $m$ points of $f(\Pi)$ have the
distribution of $m$ unordered i.i.d.\ random variables
uniformly distributed on $[0,1]$ (this holds even if we
condition also on $\Pi([0,1])$). Thus $f(\Pi)$ is a Poisson
point process of intensity $\mu$ on $[0,1]$.
\end{proof}

\begin{proof}[Proof of Corollary \ref{auxresults}]
Let $F_{\lambda}$ be the distribution function of a Poisson
random variable with mean $\lambda$. Part (\ref{monoinl})
follows immediately from Theorem \ref{main} condition
(\ref{prob}) and the facts that $F_{\lambda}(k)$ is decreasing
in $\lambda$ for all $k \geq 0$ and that
$e^{-\lambda}\lambda^k / k!$ is unimodal as a function of
$\lambda$.

Let $(\lambda,\mu)$ and $(\lambda',\mu')$ satisfy the
conditions of Corollary \ref{auxresults} part \eqref{monop}. By
Theorem \ref{main} condition (\ref{prob}), it suffices to show
that if for some fixed $k\geq 0$, the pair $(\lambda, \mu)$
satisfies $F_{\lambda}(k+1) \leq F_{\mu}(k) $ and
$e^{-\lambda}\lambda^k \leq e^{-\mu}\mu^k$, then pair
$(\lambda', \mu')$ satisfies the same inequalities (with the
same $k$).   Let $p:=\mu/\lambda$.  By a variant of
the argument in the proof of Lemma \ref{mono}
\eqref{graphmeet}, we have that $F_{\lambda}(k+1) \leq
F_{\mu}(k)$ if and only if
\begin{equation}
\label{AAA}
e^{-\lambda} \lambda^{k+1} \leq (k+1) \int_{\mu} ^{\lambda} e^{-t} t^k dt.
\end{equation}
By the change of variables, $t=\lambda s$, we see that
(\ref{AAA}) is equivalent to
\begin{equation}
\label{AAAA}
1 \leq (k+1) \int_p ^1 e^{(1-s)\lambda}s^k ds.
\end{equation}
The right side of (\ref{AAAA}) is increasing in $\lambda$.
Since $\mu'/\lambda' = p$ and $\lambda' > \lambda$, we have
$F_{\lambda'}(k+1) \leq F_{\mu'}(k)$.  Simple calculations show
that  $e^{-\lambda}\lambda^k \leq e^{-\mu}\mu^k$ if and only if
\begin{equation}
\label{AAAAA}
\lambda \geq \frac{-k \log p}{1-p}.
\end{equation}
The left side of (\ref{AAAAA}) is obviously increasing in $\lambda$. %
Thus we have that $e^{-\lambda'}(\lambda')^k \leq
e^{-\mu'}(\mu')^k$.
\end{proof}

For $x \in \R$, let $\intp{x}$ denote its integer part.

\begin{proof}[Proof of Corollary \ref{nicecond}]
First we show that $\lambda_c(\mu) \leq \mu +1$.   By
Corollary~\ref{auxresults}\eqref{monoinl}, it suffices to show
that if $\lambda = \mu +1$, then there is a thinning from
$\lambda$ to $\mu$.  By Theorem~\ref{main} condition
(\ref{prob}), we must show for some $k\in\N$ that
$F_{\lambda}(k+1) \leq F_{\mu}(k) $ and
$e^{-\lambda}\lambda^k \leq e^{-\mu}\mu^k$. The latter
condition is satisfied by choosing $k= \intp{ 1/\log ( 1 +
1/\mu) }$. As in the proof of
Lemma~\ref{mono}\eqref{graphmeet}, $F_{\lambda}(k+1) \leq
F_{\mu}(k)$ if and only if
\begin{equation}
\label{BBBBB}
\int_{\mu} ^{\lambda} e^{\mu-t}\Big(\frac{t}{\mu}\Big)^{k+1} dt \geq 1.
\end{equation}
So by the change of variables $t = \mu +s$ and the equality
$\lambda = \mu +1$, it suffices to verify that
\begin{equation}
\label{ints}
\int_{0} ^{1} e^{-s} \Big(1+ \frac{s}{\mu}\Big) ^{k+1} ds \geq 1.
\end{equation}
Inequality (\ref{ints}) is a consequence of the observation that
$$\frac{\log ( 1 + s/\mu)}{\log ( 1 + 1/\mu)}
\geq s \text{ for all } s \in [0,1],$$ which in turn follows
from $\log(\mu+s)\geq (1-s)\log\mu+s\log(\mu+1)$, an instance
of the concavity of $\log$.

Next we show that $\lambda_c(\mu) \geq \mu +1 - o(1)$. Fix $\delta<1$, and
let $\lambda = \mu + \delta$.  By Theorem~\ref{main} condition (iii) it
suffices to show that when  $\mu$
is sufficiently large there is an  integer $k$ so that
$F_{\lambda}(k+1) > F_{\mu}(k)$ and $e^{-\lambda}\lambda^{k+1}
> e^{-\mu}\mu^{k+1}$.  The latter condition is equivalent to
the inequality
\begin{equation}\label{B}
\Big( 1 + \frac{\delta}{\mu} \Big)^{k+1} > e ^{\delta},
\end{equation}
while the former condition is equivalent to the negation of \eqref{BBBBB};
moreover, by the change of variable $t=\mu+s$ this is equivalent to
\begin{equation}\label{BB}
\int_{0}^{\delta} e^{-s} \Big(1+ \frac{s}{\mu}\Big) ^{k+1} ds < 1.
\end{equation}

Set $k=\intp{\mu+1}$. For $\mu$ sufficiently large, \eqref{B} is satisfied
with this $k$. Moreover, since $k+1<\mu+2$ and $(1+s/\mu) < e^{s/\mu}$, we
see that the left side of $(\ref{BB})$ is bounded above by $\int_0^\delta
e^{2s/\mu} ds$, which is strictly less than $1$ for $\mu$ sufficiently
large.
\end{proof}

\section{Variants and Open Problems}
\label{open}

\subsection{Thickening}

Theorem~\ref{main} and its corollaries address deterministic
thinning, but what about deterministic thickening?  Does there
exist a measurable function $f$ such that if $\Pi$ is a
Poisson point process on a Borel set $S$, then $f(\Pi) \geq
\Pi$ and $f(\Pi)$ is a Poisson point process on $S$ of
intensity higher than that of the original process $\Pi$?  If
$S$ has finite volume, then the answer is no.

\begin{prop}
\label{thick} Fix $\mu>\lambda>0$, and Borel set $S \subset
\R^d$ with $\leb(S) \in (0, \infty)$.   Let $\Pi$ be a
homogeneous Poisson process of intensity $\lambda$ on $S$.
There does not exist a measurable function $f$ such that
$f(\Pi)$ is a homogeneous Poisson process of intensity strictly
larger than $\lambda$ on $S$.
\end{prop}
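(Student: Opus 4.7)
My plan is to derive a contradiction purely from the deterministic nature of $f$; the monotonicity hypothesis $f(\Pi)\geq \Pi$ mentioned in the introductory discussion will not even be needed. The idea is that a deterministic function must map the empty configuration to one fixed value, and this forces the distribution of $f(\Pi)$ to carry an atom that a Poisson process of positive intensity cannot support except at the empty configuration itself.

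First I would set $\nu_0 := f(\mathbf{0})$, where $\mathbf{0}$ denotes the empty point measure, and observe that since $\P(\Pi = \mathbf{0}) = e^{-\lambda\,\leb S} > 0$, one has
\[ \P\bigl(f(\Pi) = \nu_0\bigr) \geq e^{-\lambda\,\leb S}. \]
Next I would invoke the Poisson characterization recalled in Section \ref{later}: if $\Gamma$ is a Poisson process of intensity $\mu$ on $S$, then conditional on its total count being $n\geq 1$, its points are distributed as $n$ i.i.d.\ uniform samples on $S$. Since $\leb S>0$, the uniform distribution on $S$ is non-atomic, so for any specific configuration $\nu$ with $n\geq 1$ points one has $\P(\Gamma = \nu)=0$. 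Meanwhile $\P(\Gamma = \mathbf{0}) = e^{-\mu\,\leb S}$.

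Combining these, if $\nu_0$ has at least one point then $\P(f(\Pi)=\nu_0)=0$, directly contradicting the previous lower bound; and if $\nu_0 = \mathbf{0}$ then we obtain $e^{-\mu\,\leb S} \geq e^{-\lambda\,\leb S}$, i.e.\ $\mu\leq\lambda$, contradicting $\mu>\lambda$. There is no real obstacle in the argument; the only mildly subtle step is the non-atomicity of the uniform distribution on $S$, which is immediate from $\leb S >0$. In short, deterministic maps cannot create the extra randomness needed to increase the intensity, and the obstruction is already visible at the single point mass sitting on the empty configuration.
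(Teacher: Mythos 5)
Your proof is correct and takes essentially the same approach as the paper: both arguments fix $\nu_0 := f(\mathbf{0})$, use $\P(\Pi=\mathbf{0})>0$ to force an atom at $\nu_0$ in the law of $f(\Pi)$, and then split into the two cases $\nu_0=\mathbf{0}$ (contradicting $\mu>\lambda$ via the mass at the empty configuration) and $\nu_0\neq\mathbf{0}$ (contradicting non-atomicity of a Poisson process at any nonempty configuration).
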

\begin{remark}
In Proposition {\ref{thick}} we do not even require that
$f(\Pi) \geq \Pi$.
\end{remark}

\begin{proof}[Proof of Proposition \ref{thick}]
Let $f$ be a measurable function.  Let $0$ denote the zero
measure.  If $f(0) = 0$ then $\P(f(\Pi) =0) \geq \P(\Pi =0)$ so
that $f(\Pi)(S)$ cannot be a Poisson random variable of larger
mean than $\Pi(S)$.  If $f(0) \not = 0$ then $\P(f(\Pi) =
f(0))\geq \P(\Pi = 0) >0$ so that $f(\Pi)$ gives positive mass
to a single point measure other than $0$ and hence can not be a
Poisson process.
\end{proof}

By the Borel isomorphism theorem, for any Borel set $S$ of
infinite volume and any $\lambda'>0$, there exists a measurable
function $f$ such that if $\Pi$ is a Poisson process of
positive intensity on $S$, then $f(\Pi)$ is a Poisson point
process of intensity $\lambda'$ on $S$; but of course this does
not guarantee $f(\Pi)\geq\Pi$. It is shown in {\cite[Theorem
3]{HLS}} that even in the case of infinite volume,
deterministic thickening is impossible if we impose an
additional finitariness condition on $f$. Gurel-Gurevich and
Peled \cite{GGP} have informed us that they have recently
proved that deterministic thickening {\em is} possible if this
condition is dropped.

\subsection{Equivariant Thinning}

As remarked earlier, Theorem~\ref{main} extends immediately to
any Borel space with a finite non-atomic measure. When the
space has non-trivial symmetries, new questions arise.

Consider the length measure on the circle $S^1 = \{x\in\R^2 :
\|x\|=1\}$. Since this measure space is isomorphic to the
interval $[0,2\pi]$ with Lebesgue measure, Theorem~\ref{main}
tells us for which pairs $\lambda,\mu$ there exists a thinning.
However the circle is more interesting because we can associate
groups of symmetries. Given an isometry $\theta$ of $S^1$ and
$\nu \in \M(S^1)$, let $\theta(\nu)$  be the measure given by
$\theta(\nu)(A) := \nu(\theta^{-1}(A))$ for measurable
$A\subseteq S^1$. We say that a measurable mapping $f: \M(S^1)
\to \M(S^1)$ is {\dof{rotation-equivariant}} if $\theta(f(\nu))
= f(\theta(\nu))$ for all $\nu \in \M(S^1)$ and all rotations
$\theta$ of $S^1$. {\dof{Isometry-equivariance}} is defined
analogously.

\begin{thm}\label{rotthm}
If $S$ is the unit circle $S^1$, and Lebesgue measure is
replaced with uniform measure on $S^1$, then Theorem \ref{main}
holds even with the additional requirement that the thinning
$f$ in condition (\ref{exist}) be rotation-equivariant.
\end{thm}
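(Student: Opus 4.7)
The plan is to follow the proof of Theorem \ref{main} step by step, promoting each deletion rule to a rotation-equivariant analog. Two of the three implications transfer without any change. The argument \eqref{exist} $\implies$ \eqref{three} in Section \ref{later} compares only the laws of $\Pi(\cdot)$ and $f(\Pi)(\cdot)$, and uses no property of $f$ beyond its measurability and the inclusion $[f(\Pi)] \subseteq [\Pi]$; so the proof goes through verbatim with $[0,1]$ replaced by $S^1$. The argument \eqref{three} $\implies$ \eqref{prob} is a calculation on Poisson masses and is insensitive to the underlying space. Hence the content of Theorem \ref{rotthm} is the implication \eqref{prob} $\implies$ \eqref{exist} with a rotation-equivariant $f$.

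For this we recycle the construction of Section \ref{proof}: given an integer $k$ as in \eqref{prob}, set $f(\Pi):=\Pi$ on $\{\Pi(S^1)\le k\}$ and $f(\Pi):=h_n([\Pi])$ on $\{\Pi(S^1)=n>k\}$, where $h_n$ is the map supplied by Corollary \ref{cordel}. The first branch is trivially rotation-equivariant, and the second inherits rotation-equivariance from $h_n$. So the entire task reduces to proving a rotation-equivariant analog of Corollary \ref{cordel} on $S^1$, which in turn requires a rotation-equivariant analog of Proposition \ref{deletion}.

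The first proof of Proposition \ref{deletion}, alluded to at the start of Section \ref{uniform} and carried out in Section \ref{open}, already produces a one-point deletion $g$ on $S^1$ that satisfies $g(\theta\U)=\theta g(\U)$ for every rotation $\theta$; we will simultaneously verify that its spare randomness $v$ can be taken rotation-invariant, $v(\theta\U)=v(\U)$. Granted this, we cannot re-use the proof of Corollary \ref{cordel} verbatim, since the step ``use $V$ to randomly order $\U_{n-1}$'' picks out individual points and destroys rotation symmetry. Instead we iterate one-point deletion: with rotation-equivariant $g_m:(S^1)^{\{m\}}\to (S^1)^{\{m-1\}}$ for $m=1,\dots,n$, define
\[
h_n(\U) := g_{Z'+1}\circ g_{Z'+2}\circ\cdots\circ g_n(\U),
\]
where $Z'\in\{0,\dots,n-1\}$ is produced from $v(\U)$ by applying the quantile function of the prescribed distribution of $Z$. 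The composition of rotation-equivariant maps is rotation-equivariant, so $h_n$ is; the random depth $Z'$ is rotation-invariant, so it does not interfere. Distributional correctness then follows by iterating Lemma \ref{push}: each $g_m$ sends $Q_m$ to $Q_{m-1}$, and since $v(\U)$ is uniform on $[0,1]$ and independent of $g_n(\U)$ (hence of every further iterate), $Z'$ is independent of the iterated output.

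The main obstacle is the rotation-equivariant one-point deletion itself, where the geometry of the circle enters non-trivially. A naive equivariant rule (for instance, deleting the point closest to the centroid $\sum U_i /n$) destroys the uniformity of the survivors, while the product-space construction of Section \ref{uniform}, which uses an auxiliary coordinate to pick a distinguished rank $K\in\{1,\dots,n\}$ and deletes the $K$th point in some linear order, relies on an ordering of $S^1$ that no rotation-equivariant rule can supply. The argument of Section \ref{open} must therefore exploit the auxiliary randomness to break the symmetry just enough to single out a point without introducing any bias under rotations, while simultaneously arranging that the recoverable uniform variable $v$ is invariant. Once this delicate step is accomplished, the remainder of the proof is the bookkeeping described above.
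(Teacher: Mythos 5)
Your reduction is correct and matches the paper's approach, which is simply to observe that the proof of Theorem~\ref{main} goes through once Proposition~\ref{deletion} is replaced by its rotation-equivariant analogue (Proposition~\ref{rot}); the paper itself says only this and ``we omit the rest of the details.'' You supply one of those omitted details that is genuinely worth spelling out: the proof of Corollary~\ref{cordel} as written (``use $V$ to randomly order $\U_{n-1}$ and then select the first $Z$ points'') does not transfer to $S^1$, since on the circle there is no rotation-equivariant way to turn an unlabeled $(n-1)$-point set into an ordered tuple, and you replace it with iterated one-point deletion $h_n = g_{Z'+1}\circ\cdots\circ g_n$ where $Z'$ is read off from the rotation-invariant $v(\U)$. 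This is clean and correct: each $g_m$ is rotation-equivariant and $Q_m\mapsto Q_{m-1}$ by Lemma~\ref{push}, $Z'$ is rotation-invariant and independent of $g_n(\U)$ (hence of all later iterates), and the equivariance of the composition holds even though its length is random because $Z'(\theta\U)=Z'(\U)$.

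The one thing you do not supply is the heart of the matter, namely Proposition~\ref{rot} itself: a rotation-equivariant $g$ with a rotation-invariant companion $v$. You explicitly defer this to the gas-station construction of Section~\ref{open}, which is fair since the paper's own proof of Theorem~\ref{rotthm} likewise cites Proposition~\ref{rot} as a separate ingredient. If you want the argument to be self-contained you would need to reproduce that construction: choose the unique starting station $z(\U)$ from which one can complete the circuit, set $g(\U)=\U\setminus\{z(\U)\}$, and define $v(\U)$ by rescaling the interval of the set $T$ (of measure $1/n$) that contains $z(\U)$, where $T$ is the admissible-start set determined by $g(\U)$ alone. But as a reduction of Theorem~\ref{rotthm} to Proposition~\ref{rot}, your proposal is complete and in one respect more explicit than the paper.
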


\begin{proof}
 The proof of Theorem~\ref{main} goes through except that we need the
  following rotation-equivariant version of
Proposition~\ref{deletion}.  Assuming condition \eqref{prob},
this allows the thinning we construct to
  be rotation-equivariant. We omit the rest of the details.
\end{proof}

\begin{prop}[Equivariant deletion]\label{rot}
Let $U_1,\ldots,U_n$ be i.i.d.\ random variables uniformly
distributed on $S^1$, and define the random set
$\U:=\{U_1,\ldots,U_n\}$. There exists a
  measurable function $g:(S^1)^{\{n\}} \to (S^1)^{\ns{n-1}} \cup
  \ns{\emptyset}$, with the following properties: $g$ is
  rotation-equivariant, $g(A) \subset A$ for any set $A$, and
  $g(\U) \eqd \{U_1,\dots,U_{n-1}\}$. In addition, there exists a function
  $v:(S^1)^{\{n\}} \to [0,1]$, such that $v$ is rotation-{\em invariant}, and
  $v(\U)$ is uniformly distributed on $[0,1]$ and independent of $g(\U)$.
\end{prop}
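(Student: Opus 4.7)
The plan is to adapt the Michael-Brand proof of Proposition~\ref{deletion} given in Section~\ref{uniform}, replacing the auxiliary independent coordinates used there with the intrinsic $n$-fold-cover structure of $S^1$; this yields both $g$ and $v$ with the required rotation properties. Identify $S^1=\R/\Z$ and write each $u\in S^1$ uniquely as $u=(k+x)/n$ with $k\in\{0,\ldots,n-1\}$ and $x\in[0,1)$. When $U$ is uniform on $S^1$, $k$ and $x$ are independent and uniform on their ranges, so the pair $(k_i,x_i)$ supplies Brand's data for free. Setting $K:=\sum_i k_i \bmod n$, I would let $W$ be the point of $\U$ whose $x$-coordinate has rank $K+1$ in the linear ordering of $\{x_1,\ldots,x_n\}$ on $[0,1)$ (rank $n$ being the largest; on the measure-zero set of ties, set $g(\U):=\emptyset$), and take $g(\U):=\U\setminus\{W\}$.

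The first step is to verify via Lemma~\ref{push} that $g(\U)\eqd\{U_1,\ldots,U_{n-1}\}$. For a.e.\ $B\in(S^1)^{\{n-1\}}$ with sorted fine coordinates $0=x^B_{(0)}<\cdots<x^B_{(n-1)}<1=x^B_{(n)}$ and $K_B:=\sum k_{b_i}\bmod n$, a point $w=(k_w,x_w)$ lies in $R(B)$ iff $x_w$ falls in the linear gap $(x^B_{(r-1)},x^B_{(r)})$ with $r:=(K_B+k_w+1)\bmod n$ (reading $0$ as $n$). As $k_w$ runs over $\{0,\ldots,n-1\}$, $r$ permutes $\{1,\ldots,n\}$, so the total $x_w$-length is $\sum_r(x^B_{(r)}-x^B_{(r-1)})=1$; weighting by the factor $1/n$ coming from Haar measure on each coarse arc yields $\leb(R(B))=1/n$, exactly the hypothesis of Lemma~\ref{push}.

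The main step---and the main obstacle---is rotation-equivariance. Under a continuous rotation of $\U$ by $\theta$, each fine coordinate $x_i$ slides by $n\theta \bmod 1$; generically at any instant only one $x_j$ wraps through $1\equiv 0$. Such a wrap increments $k_j$ (and thus $K$) by $1$, while jumping $x_j$'s linear rank from $n$ to $1$ and pushing every other $x_i$'s rank up by $1$. These two $+1$'s cancel: if the previously chosen point is not the wrapping $j$, its new rank is (old rank)$+1=K+2$, matching the new required rank $(K+1)+1$; the boundary case where the chosen point equals $j$ (which forces old rank $K+1=n$, so $K=n-1$) gives new rank $1=(K+2)\bmod n$, which also matches. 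Hence $W(\theta\U)=\theta W(\U)$ for a.e.\ $(\U,\theta)$, and $g$ is rotation-equivariant.

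Finally, for $v$: observe that sub-arcs of $R(B)$ with $r=n$ and $r=1$ arise from consecutive $k_w$'s and abut at $(k_w+1)/n$ on $S^1$, so they glue into a single cyclic arc; after gluing, $R(B)$ is a disjoint union of $n-1$ cyclic sub-arcs, in natural bijection with the $n-1$ cyclic gaps of $B$'s fine coordinates on $\R/\Z$, each of length $1/n$ times its gap. Define $v(\U)\in[0,1]$ to be the normalized counterclockwise-arclength position of $W$ within its cyclic sub-arc. Conditional on $g(\U)=B$, the point $W$ is uniform on $R(B)$, so $v(\U)$ is uniform on $[0,1]$ and independent of $B$; and a rotation of $\U$ rigidly rotates the sub-arc of $W$ together with $W$ itself, preserving fractional position and thereby making $v$ rotation-invariant.
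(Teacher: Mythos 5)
Your proof is correct, and it reaches the result by a genuinely different construction from the paper's. The paper builds $g$ via a ``gas station'' (cycle lemma / Spitzer's lemma) argument: put $1/n$ of a tank of gas at each point of $\U$ and delete the unique point from which one can complete a circuit; the admissible region $F(B)$ is then shown directly to have measure $1/n$. You instead observe that the $n$-fold cover $u\mapsto nu$ of $S^1$ supplies, intrinsically and rotation-compatibly, exactly the discrete coordinate $k_i$ and fine coordinate $x_i$ that Brand's proof of Proposition~\ref{deletion} obtained from auxiliary randomization, and you then run that proof verbatim with $K=\sum k_i\bmod n$ and the $(K+1)$st-ranked fine coordinate. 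Verifying $Q(R(B))=1/n$ is as easy as in Brand's proof, and the rotation-equivariance check (the increment to $K$ from a wrap-around cancelling the rank shift) is short and correct; your direct computation for a finite rotation would make it fully rigorous, but the local one-wrap-at-a-time argument is sound. For $v$, both you and the paper are forced to the same idea --- the normalized arclength position of the deleted point within its connected component of the admissible set $R(B)$ --- since neither construction has a spare third coordinate; your identification of those components with the $n-1$ cyclic gaps of the fine coordinates, and the gluing of the $r=1$ and $r=n$ pieces across a coarse boundary, is correct. What each route buys: the paper's gas-station version is more geometrically transparent and self-contained; your version unifies the two deletion propositions under a single mechanism (Lemma~\ref{push} plus Brand-style coarse/fine splitting), making clear that the rotation-equivariant statement is no harder than the plain one once the right coordinates are chosen. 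One small point worth adding: as for $g$, assign $v(\U)$ an arbitrary value on the measure-zero set of ties or coincidences with a coarse boundary.
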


To construct this function we rely on a classical problem
involving fuel shortage, see e.g.\ \cite[Gasoline
Crisis]{winklerpuzzle}. See also Spitzer's Lemma \cite[Theorem
2.1]{spitzerlemma}. We repeat the problem and its solution
below.

\begin{lemma}
  Suppose a circular road has several gas stations along its length with
  just enough gas in total to drive a full circle around the road. Then it is
  possible to start at one of the stations with an empty tank and complete
  a circuit without running out of gas before the end.
\end{lemma}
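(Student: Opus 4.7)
The plan is to prove this by a direct minimization argument rather than by induction on the number of stations. Fix an arbitrary orientation on the circle and pick any station $s_0$. Imagine driving from $s_0$ all the way around the circle with an empty initial tank, allowing the fuel level to go \emph{negative} if necessary. Let $F(x)$ denote the fuel level after driving distance $x$ from $s_0$. Then $F$ decreases linearly between stations at the constant rate of consumption and jumps upward by the stored amount at each station; since by hypothesis the total fuel equals the total consumption over one full circuit of length $L$, we have $F(0)=F(L)=0$.

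Let $x^\ast$ be a point at which $F$ attains its minimum on $[0,L]$. Because $F$ is piecewise linear, strictly decreasing on each interval between successive stations, and jumps upward at each station, this minimum is attained at the instant immediately before arriving at some station $s^\ast$; formally, letting $F(x^{\ast-})$ denote the left limit, we have $F(x^{\ast-})\le F(y)$ for every $y\in[0,L]$. Reset the route so that $s^\ast$ is the starting station and traverse the circle in the same orientation. If $G(x)$ denotes the fuel level at distance $x$ from $s^\ast$ starting with an empty tank, then, reading $F$ with the appropriate wraparound, $G(x)=F(x^\ast+x \bmod L)-F(x^{\ast-})$ at every point that is not a station, and similarly with left limits at stations. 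By the minimality of $F(x^{\ast-})$, we get $G(x)\ge 0$ everywhere, so the car never runs out of gas before completing the circuit.

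The only delicate point is the bookkeeping at the stations: one must be careful whether to credit a station's fuel just before or just after passing it, so that $F$ is well defined as a left- or right-continuous function and so that the minimum really is attained at a station rather than in the interior of an inter-station arc. This is the only step that could trip one up, but it is routine once a convention is fixed, and no quantitative estimate is needed. The argument also automatically identifies \emph{which} station to start at, namely the one immediately following the arc on which $F$ bottoms out, a feature we will use when applying the lemma to produce a canonical (rotation-equivariant) choice in the proof of Proposition~\ref{rot}.
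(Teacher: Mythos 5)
Your proof is correct and takes essentially the same approach as the paper: allow negative fuel, track the fuel level over one full loop (which returns to zero), and restart at the location of the minimum. The paper states this more tersely, while you add the bookkeeping about left limits at stations, but the underlying argument is identical.
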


\begin{proof}
Pretend at first we are allowed to have a negative amount of
gas and still drive. Start at any point and consider the amount
of gas in the car as a function of the location.  After a full
circle the tank is exactly empty again. Any point at which the
function takes its minimum is a suitable starting point.
\end{proof}

\begin{proof}[Proof of Proposition \ref{rot}]
  Place $n$ gas stations at the points of $\U\subset S^1$ with gas for
  $1/n$ of the circle at each. Let $z(\U)$ be the station from which it is
  possible to drive around $S^1$ (in a counterclockwise direction); if
  there is more than one such station, set $z(\U) = \emptyset$ (this has
probability 0 for i.i.d.\ uniform points). Clearly $g(\U):=
  \U\setminus\{z(\U)\}$ is rotation-equivariant.

To see that $g(\U)$ has the claimed distribution, consider a
set $B\in(S^1)^{\{n-1\}}$, and let $F(B) \subset S^1$ be the
set of $x\in S^1$ so that $z(B\cup\{x\})=x$. By Lemma \ref{push}, it
 suffices to show that $F(B)$ has measure $1/n$ for a.e.\ $B$.

To see that $F(B)$ has measure $1/n$, consider as above the
amount of
   gas in the car
(allowing a deficit) when $1/n$ gas is placed at each point of $B$, but
now continue driving indefinitely around the circle. The gas function
$h(t)$ is skew-periodic: $h(t+1)=h(t)-1/n$. Furthermore, it has
derivative $-1$ except at points $t \pmod 1 \in B$ where $h$ is
discontinuous. It follows that there is a set $T$ of measure $1/n$ so
that $h$ attains a new minimum value at every $t \pmod 1 \in T$. The
set $T$ is exactly the set of locations where it is possible to drive
a full circle starting with $1/n$ gas, hence these are the $x$ where
$z(B\cup\{x\})=x$. Note that $T$ is a finite union of intervals in $S^1$.

We define $v$ as follows. If $z(\U) = \emptyset$, then set
(arbitrarily) $v(\U) = 0$; otherwise, compute the set $T$
corresponding to $g(\U)$. Given $g(\U)$, $z(\U)$ is uniformly
distributed on $T$.  Take the component (interval) of $T$
containing $z(\U)$, rescale it to the interval $[0,1]$, and let
$v(\U)$ be the image of $z(\U)$ under this rescaling.
\end{proof}

Proposition~\ref{rot} gives a deletion procedure that is equivariant to
rotations, but not to other isometries of the circle (namely, reflections).

\begin{question}\label{iso}
  Give necessary and sufficient conditions on $\lambda$ and $\mu$ for the
  existence of an isometry-equivariant thinning on the circle $S^1$ from
  $\lambda$ to $\mu$.
\end{question}

\begin{remark}\label{diffsets}
  It is easy to see that the isometry-equivariant version of Proposition
  \ref{rot} does not hold in the case $n=2$. Therefore, if there exists an
  isometry-equivariant thinning on $S^1$, whenever there are exactly two
  points, it must either keep both of them or delete both of them. Hence
  the set of $(\lambda, \mu)$ for which there is an isometry-equivariant
  thinning on $S^1$ from $\lambda$ to $\mu$ is {\em strictly} smaller than the
  set for which there is a rotation-equivariant
  thinning. We do not know whether an
  isometry-equivariant version of Proposition~\ref{rot} holds in the case
  $n \geq 4$. Ori Gurel-Gurevich has found a construction in the case
  $n=3$ (personal communication).
\end{remark}

Theorem \ref{rotthm} can be easily generalized to some other
symmetric measure spaces, by using only Proposition~\ref{rot}.
For example, the $2$-sphere $S^2 = \{x\in\R^3 : \|x\|=1\}$ with
the group of rotations that fix a given diameter, or the torus
$\R^2/\Z^2$ with translations. However, we do not know whether
there exists a rotation-equivariant thinning on the sphere, or
an isometry-equivariant thinning on the torus.

\begin{question}
Give necessary and sufficient conditions on $\lambda$ and $\mu$
for the existence of an rotation-equivariant (or
isometry-equivariant) thinning from $\lambda$ to $\mu$ on the
$2$-sphere $S^2$.
\end{question}

Similar questions about thinning can be asked in a more general
setting. Let $G$ be a group of measure-preserving bijections on
a standard Borel space $\mathcal S$ and let $\M(\mathcal S)$ be
the space of simple point measures on $\mathcal S$. We say that
$f:\M(\mathcal S) \to \M(\mathcal S)$ is
{\dof{$\boldsymbol{G}$-equivariant}} if $f(\gamma\nu) = \gamma
f(\nu)$ for all $\nu \in \M(\mathcal S)$ and all $\gamma \in G$.

\sloppypar
For the unit ball it is not difficult to show that an isometry-equivariant
version of Proposition~\ref{deletion} holds. Indeed, since isometries of
the ball preserve the norm, any selection scheme that depends only on the
norms of the points will automatically be isometry-equivariant. The
function $x \mapsto \|x\|^d$ maps a uniformly distributed random variable
on the unit ball to a uniformly distributed random variable on $[0,1]$, and
any thinning procedure on $[0,1]$ can be composed on this mapping. Thus for
the unit ball, Theorem~\ref{main} holds even with the additional
requirement that the thinning $f$ in condition \eqref{exist} be
isometry-equivariant.

\begin{question}
For which spaces $(\mathcal S, G)$ is the existence of a
thinning from $\lambda$ to $\mu$ equivalent to the existence of
a $G$-equivariant thinning from $\lambda$ to $\mu$?  As seen
above, this property holds for $S^1$ with rotations (Theorem
\ref{rotthm}), and for the ball with isometries, but not for
$S^1$ with isometries (Remark \ref{diffsets}).
\end{question}

\subsection{Splitting}

We say that a deterministic thinning $f$ on $[0,1]$ from
$\lambda$ to $\mu$ is a {\dof $(\lambda, \mu)$-splitting} if
$f(\Pi)$ and $\Pi - f(\Pi)$ are both Poisson point processes on
$[0,1]$, with respective intensities $\mu$ and $\lambda - \mu$
respectively. The existence of a $(\lambda, \mu)$-splitting
implies but is not equivalent to the existence of both a thinning from
$\lambda$ to $\mu$ and a thinning from $\lambda$ to $\lambda-\mu$.

\begin{question}
Give necessary and sufficient conditions on $(\lambda, \mu)$ for the
existence of a $(\lambda, \mu)$-splitting.
\end{question}

\bibliographystyle{habbrv}
\bibliography{references}

\vspace{9mm} \noindent {\sc Omer Angel:}
{\tt angel at math dot ubc dot ca}\\
{\sc Alexander E. Holroyd:}
{\tt holroyd at math dot ubc dot ca}\\
{\sc Terry Soo:} {\tt tsoo at math dot ubc dot ca}\\
Department of Mathematics, University of British Columbia, \\
121--1984 Mathematics Road, Vancouver, BC V6T 1Z2, Canada.

\vspace{3mm} \noindent {\sc Alexander E. Holroyd:}\\
Microsoft Research, 1 Microsoft Way, Redmond, WA 98052, USA.

\end{document}